\newcommand{\Q}{\mathbb{Q}}
\newcommand{\N}{\mathbb{N}}
\newcommand{\set}[1]{\left\{#1\right\}}
\newcommand{\abs}[1]{\left| #1 \right| }
\newcommand{\OurLinearTerm}{\frac{99}{37}}
\newcommand{\OurConstantTerm}{\frac{187}{37}}
\theoremstyle{plain}
\newtheorem{theorem}[equation]{Theorem}
\newtheorem{lemma}[equation]{Lemma}
\newtheorem{corollary}[equation]{Corollary}
\newtheorem{proposition}[equation]{Proposition}
\theoremstyle{definition}
\newtheorem{definition}[equation]{Definition}
\newtheorem{exmp}[equation]{Example}
\newenvironment{nouppercase}{%
  \renewcommand{\uppercasenonmath}[1]{}}{}
\numberwithin{equation}{section}
\numberwithin{figure}{section}
\title[Number of factors of an odd perfect number]{On inequalities involving counts of the prime factors of an odd perfect number}
\author{Graeme Clayton and Cody S. Hansen}
\begin{document}

\begin{nouppercase}
\maketitle
\end{nouppercase}

\begin{abstract}
Let $N$ be an odd perfect number.
Let $\omega(N)$ be the number of distinct prime factors of $N$ and let $\Omega(N)$ be the total number (counting multiplicity) of prime factors of $N$.
We prove that $\OurLinearTerm\omega(N) - \OurConstantTerm \leq \Omega(N)$ and that if $3\nmid N$, then $\frac{51}{19}\omega(N)-\frac{46}{19} \leq \Omega(N)$.
\end{abstract}

\section{Introduction}
A positive integer is said to be \textit{perfect} if the sum of its positive integer divisors is twice itself.
For the purposes of this paper, we will assume that $N$ is an odd perfect number, although it is a centuries-old open problem to prove that none exist.
It was shown by Euler that for any such $N$ we have
\[
N=p_0^{e_0}m^2,
\]
where $p_0\equiv e_0\equiv 1\pmod 4$ and $p_0$ is a prime not dividing $m$. We call $p_0$ the \textit{special} prime factor of $N$.

It is apparent from the previously displayed equation that the inequality
\begin{align}\label{Eq:TrivialIneq}
2\omega(N)-1 \leq \Omega(N)
\end{align}
holds, where $\omega(N)$ is the number of distinct prime factors of $N$ and $\Omega(N)$ is the total number (counting multiplicity) of prime factors of $N$.
Since we are only concerned with the prime-counting functions $\omega$ and $\Omega$ applied to the argument $N$, for simplicity we will hereafter suppress $N$ from the notation.

In a paper by Ochem and Rao \cite{O&R}, and in two papers by Zelinsky \cite{Zel1,Zel2}, inequalities of the form $a\omega+b\leq \Omega$, with $a,b \in \Q$ and $a>2$, are obtained as improvements to the trivial inequality \eqref{Eq:TrivialIneq}.
Prior to this paper, the best such (asymptotic) inequalities, as proven by Zelinsky, were
\[
\frac{66}{25}\omega-5 \leq \Omega, \qquad \text{if $3\mid N$},
\]
and
\[
\frac{302}{113}\omega-\frac{286}{113} \leq \Omega, \qquad \text{if $3\nmid N$.}
\]

These improvements over the trivial inequality \eqref{Eq:TrivialIneq} are based on the fact that $\sigma$, the sum of divisors function, is multiplicative.
Hence, letting the prime factorization of $N$ be given as  $N=p_0^{e_0}p_1^{e_1}\cdots p_k^{e_k}$, then
\begin{align*}
2p_0^{e_0}p_1^{e_1}\cdots p_k^{e_k}=2N = \sigma(N) = \sigma( p_0^{e_0}p_1^{e_1}\cdots p_k^{e_k})=\prod_{i=0}^k \sigma(p_i^{e_i}).
\end{align*}
From this we see that each odd prime $q$ dividing $N$ must divide some $\sigma(p_i^{e_i})$. Hence, we make the following definition.

\begin{definition}
Given an odd perfect number $N$, we say that a prime $p$ dividing $N$ \textit{contributes} a prime $q$ (or that $q$ is \textit{contributed} by $p$) if $q\mid\sigma(p^{e})$, where $p^e\mid\mid N$ (meaning that $p^e\mid N$ but $p^{e+1}\nmid N$).
\end{definition}

We observe that
\[
\sigma(p^{e}) =p^e+p^{e-1}+\cdots+p+1 = \frac{p^{e+1}-1}{p-1} = \prod_{\substack{d\mid (e+1)\\ d\neq 1}}\Phi_d(p),
\]
where $\Phi_d(x)$ is the $d^{\text{th}}$ cyclotomic polynomial.
Thus, the factorization of $N$ is closely linked to the factorization of cyclotomic polynomials.
Our main improvement to previous bounds is a result of discovering new restrictions on such factorizations.

We use these results to obtain a system of inequalities.
We then optimize that system to get the following theorem.
\begin{theorem}\label{Theorem:OurResult}
If $N$ is an odd perfect number, then
\[
\OurLinearTerm\omega - \OurConstantTerm \leq \Omega.
\]
\end{theorem}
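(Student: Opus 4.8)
The plan is to turn the single global identity $\prod_{i=0}^{k}\sigma(p_i^{e_i}) = 2N$ into a finite system of linear inequalities in counting variables and then to optimize that system. Write $A_e$ for the number of primes $p$ with $p^e\mid\mid N$, with the special prime $p_0$ tracked in its own variables since $e_0\equiv 1\pmod 4$ while every other exponent is even; then $\omega=\sum_e A_e$ and $\Omega=\sum_e e\,A_e$, the sums running over finitely many exponent classes (one truncates $e$ past a value beyond which every further constraint is automatically satisfied, since a prime of huge exponent contributes many primes and is never the bottleneck). The trivial inequality \eqref{Eq:TrivialIneq} uses only that every non-special exponent is even, so $\Omega\ge e_0+2(\omega-1)\ge 2\omega-1$; any improvement must come from showing that primes of small exponent cannot \emph{pay their own way}. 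Concretely, a prime $p$ with $p^e\mid\mid N$ contributes exactly the primes dividing $\sigma(p^e)=\prod_{d\mid e+1,\ d>1}\Phi_d(p)$, and every odd prime of $N$ must be contributed by \emph{something}, so counting (prime, contributed prime) incidences on both sides of the identity yields relations between the $A_e$; the known bounds of Ochem--Rao and Zelinsky are the output of a comparatively coarse version of this count.

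To beat those bounds I would first prove a family of structural lemmas about the cyclotomic factors $\Phi_d(p)$. For the smallest exponents this is entirely explicit: if $p^2\mid\mid N$ then $\sigma(p^2)=\Phi_3(p)=p^2+p+1$, every prime $q\neq 3$ dividing it is $\equiv 1\pmod 3$, the factor $3$ occurs to exponent at most $1$, and similarly for $e=4,6,\dots$ one controls $\Phi_5(p),\Phi_7(p),\Phi_3(p),\dots$, their possible repeated prime factors, and the residue classes of the primes they contribute. The genuinely new input — the source of the improvement — is a sharper restriction on which primes a small-exponent prime may contribute and on how the exponents of the primes it does contribute are in turn forced to be large; phrased carefully, a cluster of primes all of small exponent must collectively contribute primes of large exponent elsewhere in $N$, so small exponents are expensive. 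Each such lemma is recorded as one more linear inequality among the $A_e$ (and among the finer variables needed to track the special prime and the prime $3$ correctly).

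With the lemmas in hand I would set up the linear program: variables $A_e$ together with their refinements, the normalizations $\sum_e A_e=\omega$ and $\Omega=\sum_e e\,A_e$, and all the lemma-inequalities; the objective is to minimize $\Omega$ for fixed $\omega$, equivalently to find the largest $a$ and the matching $b$ with $a\omega+b\le\Omega$. Solving it amounts to exhibiting an explicit nonnegative combination of the lemma-inequalities that collapses to $\OurLinearTerm\omega-\OurConstantTerm\le\Omega$; the denominator $37$ and the constants $\frac{99}{37},\frac{187}{37}$ are read off the optimal vertex, and re-running the program with the prime $3$ deleted from the list of allowed contributed primes (the case $3\nmid N$) produces the stronger $\frac{51}{19}\omega-\frac{46}{19}\le\Omega$.

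The main obstacle is the middle step: making the cyclotomic restrictions simultaneously true and strong enough. A weak contribution bound merely reproves what is already known, so the new inequality forces the case analysis on small exponents — $e=2,4,6$, and the special-prime cases $e_0=1,5$ — to be pushed far, while keeping every claim about divisibility of $\Phi_d(p)$ airtight. A secondary difficulty is bookkeeping: the special prime and the prime $3$ must each be given enough dedicated variables that the lemmas can be stated without losing strength, yet the resulting linear program must remain small enough to solve and, more importantly, to certify by hand through its dual solution.
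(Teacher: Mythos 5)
Your overall architecture matches the paper's: both set up counting variables refined by exponent, by number of contributed primes, and by residue modulo $3$, derive a system of linear (in)equalities from structural facts about the factorizations $\sigma(p^e)=\prod_{d\mid e+1,\ d>1}\Phi_d(p)$, and then exhibit a nonnegative combination of those inequalities that collapses to $\OurLinearTerm\omega-\OurConstantTerm\le\Omega$. The final optimization step, the dedicated bookkeeping for the special prime and for the prime $3$, and the modification for the case $3\nmid N$ are all described essentially as the paper carries them out.

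The gap is that the step you yourself flag as ``the genuinely new input'' is never supplied. You say you would prove ``a sharper restriction on which primes a small-exponent prime may contribute,'' but you give neither a statement nor a mechanism, and without it the linear program only recovers the previously known bounds of Ochem--Rao and Zelinsky. The paper's actual new content is a \emph{linking map}: to each $p\in S_1\cup S_2\cup S_{3,1}$ one assigns a distinguished contributed prime $\ell_p$, and one proves this map is injective on $S_1\cup S_2$ (Lemma \ref{Lemma:LinkInjective}) and at most two-to-one on the larger domain (Lemma \ref{Lemma:LinkAlmostInjective}); these yield the inequalities \eqref{Eq:BetterZel} and \eqref{Eq:NewIneq}, which are exactly what pushes the linear coefficient past $\frac{302}{113}$. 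The injectivity rests on specific elementary arithmetic: if $a^2+a+1=cd$ and $b^2+b+1=ce$ with $c$ prime and $c>d>e$, then $c\mid(a-b)(a+b+1)$ together with size bounds forces $c=a+b+1$ (Lemma \ref{Lemma:Factorization1}), from which each potential sharing configuration is killed either by a contradiction modulo $3$ or by the conclusion $a=b^2$ (so $a$ is not prime), with a separate uniqueness statement (Corollary \ref{Corollary:Uniqueness}) handling the exceptional case in which an element of $S_{2,2}$ shares its largest contributed prime with an element of $S_{2,1}$ and must be relinked to its smaller contributed prime. None of this is recoverable from the slogan that ``small exponents are expensive''; the sharing configurations must be ruled out case by case, and that case analysis is the heart of the proof. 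As written, your argument establishes at best the bounds that were already known.
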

This improves Zelinsky's bound in the case when $3\mid N$, but only asymptotically improves the bound when $3\nmid N$. However, we are able to easily modify the system of inequalities to handle this case, resulting in the next theorem, which is a strict improvement over Zelinsky's bound.

\begin{theorem}\label{Theorem:OurNot3Result}
If $N$ is an odd perfect number and $3\nmid N$, then
\[
\frac{51}{19}\omega - \frac{46}{19}\leq \Omega.
\]
\end{theorem}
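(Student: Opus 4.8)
The plan is to rerun the optimization machinery behind Theorem~\ref{Theorem:OurResult}, modifying the underlying system of inequalities to take advantage of the hypothesis $3\nmid N$. Write $N=p_0^{e_0}p_1^{e_1}\cdots p_k^{e_k}$ with $p_0$ the special prime, so that $\omega=k+1$ and $\Omega=\sum_{i=0}^{k}e_i$; it suffices to show $19\Omega\ge 51\omega-46$. Two bookkeeping facts, both consequences of the multiplicativity of $\sigma$, anchor the whole argument: comparing $\Omega$ on the two sides of $2N=\prod_{i=0}^{k}\sigma(p_i^{e_i})$ gives $\sum_{i}\Omega(\sigma(p_i^{e_i}))=1+\Omega$, and comparing $q$-adic valuations shows that every prime $q\mid N$ divides at least one $\sigma(p_i^{e_i})$, i.e.\ is \emph{contributed}.

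The next step is to extract the arithmetic content of the assumption $3\nmid N$. Since $3\mid\sigma(p_i^{e_i})$ would force $3\mid 2N$, we have $3\nmid\sigma(p_i^{e_i})$ for every $i$. Reducing $\sigma(p^{e})=1+p+\cdots+p^{e}$ modulo $3$ then pins down residues: the special prime must satisfy $p_0\equiv 1\pmod 3$ with $e_0\not\equiv 2\pmod 3$, while any non-special $p_i$ with $p_i\equiv 1\pmod 3$ must have $e_i\not\equiv 2\pmod 3$, which combined with $e_i$ even forces $e_i\ge 4$ — the cheap value $e_i=2$ is unavailable for such primes. In the same vein, a small prime such as $5$ cannot be the special prime since $5\equiv 2\pmod 3$. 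Feeding this surviving residue information into the cyclotomic factorization $\sigma(p^{e})=\prod_{d\mid e+1,\,d\ne 1}\Phi_d(p)$, together with the structural lemmas proved earlier in the paper about how few and how small the primes contributed by a single $p_i^{e_i}$ can be, sharpens every per-prime contribution inequality: the cases that previously had to allow a factor of $3$ are simply deleted, and several of the remaining ones tighten.

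With these ingredients I would build a linear program exactly as in the proof of Theorem~\ref{Theorem:OurResult}, as a relaxation of the set of possible configurations: introduce nonnegative integer variables counting the primes $p_i$ of each relevant type (the special prime; non-special primes with $e_i=2$; those with $e_i\ge 4$; and the tightly constrained small contributed primes), express $\Omega$ and $\omega$ as linear functionals of these variables, and impose the constraints (i) the conservation identity $\sum_i\Omega(\sigma(p_i^{e_i}))=1+\Omega$, (ii) the covering requirement that each prime of $N$ be contributed, and (iii) the exponent-and-size restrictions just described. Minimizing $19\Omega-51\omega$ over the resulting polytope and exhibiting an explicit nonnegative combination of the constraints — a dual certificate — of value $-46$ then gives the theorem. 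This last step is the crux: the structural lemmas have to be packaged into a system tight enough to push the linear coefficient all the way up to $51/19$, and the modified $3\nmid N$ system and its optimal multipliers must be chosen so that the optimum is exactly $-46/19$ rather than something weaker; by comparison, the congruence analysis modulo $3$ is routine, and optimizing the program is mechanical.

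Because the linear-programming argument is essentially asymptotic, I would finish by disposing of the finitely many small-$\omega$ configurations directly, invoking the known lower bounds on $\omega$ for odd perfect numbers — which are considerably stronger under the hypothesis $3\nmid N$ — so that $\omega$ always lies in the range where the bound produced by the program applies.
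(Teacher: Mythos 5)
Your overall strategy is the same as the paper's: keep the system of inequalities built for Theorem \ref{Theorem:OurResult}, encode the hypothesis $3\nmid N$ as extra constraints, and re-optimize the linear program. Your mod-$3$ observations are correct ($3\nmid\sigma(p_i^{e_i})$ for all $i$, hence $p_0\equiv 1\pmod 3$, and any non-special $p\equiv 1\pmod 3$ cannot have exponent $2$, so the sets $S_{m,1}$ are all empty), and they are exactly the arithmetic content that must be injected --- though the paper does this more economically, by simply adjoining the equality $f_3=0$ (which, combined with the already-present inequality \eqref{Eq:Contributed3's}, forces $\abs{S_{2,1}}=\abs{S_{3,1}}=\abs{S_{\geq 4,1}}=0$) and tightening \eqref{Eq:DistinctFactors} to $\omega=1+\abs{S}+\abs{T}$, since $3$ no longer counts among the distinct prime factors.

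The gap is that you stop exactly where the proof begins. The entire content of the theorem is the existence of a specific nonnegative combination of the modified inequalities whose expansion is $\frac{51}{19}\omega-\frac{46}{19}-\Omega$ plus terms with nonnegative coefficients; you acknowledge that producing this certificate is ``the crux'' but do not write down the final system or the multipliers (the paper's Table \ref{Table:No3CoefficientTable}), so nothing in your write-up certifies that the optimum actually reaches slope $51/19$ and intercept $-46/19$ rather than something weaker. Your proposed variable set (special prime, exponent-$2$ primes, exponent-$\geq 4$ primes, ``small contributed primes'') is also coarser than the decomposition the paper needs --- the subdivisions $S_1^S$, $S_1^T$, $S_{3,1}^{S,S}$, etc., and the linked-prime inequalities \eqref{Eq:BetterZel} and \eqref{Eq:NewIneq} are what push the slope up to $51/19$ --- so it is not clear your relaxation is tight enough. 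A secondary misconception: your closing paragraph treats the LP bound as asymptotic and proposes to dispose of small-$\omega$ cases separately via lower bounds on $\omega$. That step is unnecessary: a nonnegative linear combination of inequalities each valid for every odd perfect number is itself valid for every odd perfect number, with no restriction on the range of $\omega$. (The word ``asymptotically'' in the paper refers only to comparing two affine bounds with different slopes, not to any limitation of the method.)
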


% The $3\nmid N$ case is $51/19 \omega - 46/19 \leq \Omega$, which is better than Zelinsky's bound.

\section{Definitions}

Following notation from the previously cited papers, as well as from the introduction, let $\omega$ be the number of distinct prime factors of $N$, and let $\Omega$ be the number of total prime factors of $N$.
Let $f_3$ be the integer where $3^{f_3}\mid\mid N$ (i.e., $f_3$ is the $3$-adic valuation of $N$).
Also, let $p_0$ be the special prime of $N$.

We will focus on prime divisors of $N$ that are neither $3$ nor the special prime. Therefore, we make the following notational choice:
\begin{definition}
Let
\[
P \coloneqq \set{p \text{ prime}\;:\;p\mid N,\ p\neq p_0,\ p\neq 3}.
\]
\end{definition}

We often need to speak of the set of the primes contributed by $P$.
Thus, we introduce the following notation.
\begin{definition}
Let
\[
Q \coloneqq \set{q\ \text{prime}\;:\; \text{$q$ is contributed by some $p\in P$}}
\]
\end{definition}

We define a function that describes what primes are contributed by a prime in $P$.
\begin{definition}
Let $f$ be the function from $P$ to the power set of $Q$, defined by the rule
\[
f(p) =\set{q\in Q \;:\; p \text{ contributes } q }.
\]
% Maybe change the $f$?
By an abuse of notation, for any subset $P'\subseteq P$ we also define
\[
f(P') = \bigcup_{p\in P'} f(p).
\]

\end{definition}
We now separate the primes in $P$, based on how many times they divide $N$.
\begin{definition}
Let
\[
S\coloneqq\set{p\in P\;:\; p^2 \mid \mid N }
\]
and
\[
T\coloneqq\set{p\in P\;:\; p^4\mid N}.
\]
Note that $S\cap T =\emptyset$ and $S\cup T = P$. Let $g_4$ be the number of primes factors $p$ of $N$, counting multiplicity as divisors of $N$, where $p\in T$.
\end{definition}

In order to further differentiate between elements of $S$, we introduce notation describing certain subsets of $S$.

\begin{definition}

Given an integer $m$, given sets $U_1,\dots, U_n$, with $n\leq m$, and given an integer $j\in\set{1,2}$, we let $S_{m,j}^{U_1,\dots, U_m}$ denote the set of all $p \in S$ that satisfy the following conditions:
\begin{enumerate}
    \item $p$ contributes exactly $m$ primes (counting multiplicity), call them $q_1,\dots, q_m$,
    \item up to reordering, $q_i\in U_i$ for each $1\leq i \leq n$, and
    \item $p\equiv j \pmod 3$.
\end{enumerate}
\end{definition}
We allow $n<m$ because we do not always need to focus on all of the contributed primes of elements of $S$.
Also, notice that we do not allow $j=0$, because $3 \not\in S$.

Note that whenever $j=1$, all elements of the set $S_{m,j}^{U_1,\dots,U_n}$ will contribute the prime $3$.
This fact is proved in Lemma \ref{Lemma:OnlyOne3}.
% Start reading here
\begin{lemma}\label{Lemma:OnlyOne3}
If $p \in S_{m,1}^{U_1,\dots, U_n}$, then $p$ contributes $3$ exactly once.
\end{lemma}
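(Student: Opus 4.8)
The plan is a direct computation with the single relevant value of $\sigma$. Since $p\in S$, by definition $p^{2}\mid\mid N$, so the only exponent that matters for $p$ is $e=2$, and the primes contributed by $p$ are exactly the prime divisors of
\[
\sigma(p^{2}) = 1+p+p^{2} = \Phi_{3}(p),
\]
each counted with the multiplicity with which it divides $\sigma(p^{2})$. Hence it suffices to show that $3$ divides $1+p+p^{2}$ exactly once, i.e., that $3\mid 1+p+p^{2}$ but $9\nmid 1+p+p^{2}$.

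Next I would use the hypothesis that $p\equiv 1\pmod 3$, which is the condition $j=1$. Writing $p=3k+1$ for a positive integer $k$ and expanding gives
\[
1+p+p^{2} = 1+(3k+1)+(3k+1)^{2} = 3+9k+9k^{2} = 3\bigl(1+3k+3k^{2}\bigr).
\]
The cofactor $1+3k+3k^{2}$ is congruent to $1$ modulo $3$, so it is not divisible by $3$; on the other hand the displayed factorization shows $3\mid 1+p+p^{2}$. Therefore the $3$-adic valuation of $\sigma(p^{2})$ equals $1$, which is exactly the assertion that $p$ contributes the prime $3$ exactly once.

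I do not expect any real obstacle here. The only things to keep straight are: (i) the reduction, via the definition of $S$, that $p^{2}\mid\mid N$ so that $\sigma(p^{e})=\sigma(p^{2})=\Phi_{3}(p)$; and (ii) the bookkeeping convention that the multiplicity with which $p$ contributes a prime $q$ is the exact power of $q$ dividing $\sigma(p^{e})$. As a sanity check, the same computation also recovers the remark preceding the lemma: the factorization $1+p+p^{2}=3\bigl(1+3k+3k^{2}\bigr)$ shows that every $p\equiv 1\pmod 3$ in $S$ does contribute $3$.
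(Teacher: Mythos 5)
Your proof is correct and takes essentially the same approach as the paper: both verify directly that the $3$-adic valuation of $\sigma(p^2)=p^2+p+1$ equals $1$ when $p\equiv 1\pmod 3$. The paper checks the three residues $1,4,7$ modulo $9$, while you factor $1+p+p^2=3(1+3k+3k^2)$ with $p=3k+1$; this is a cosmetic difference only.
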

\begin{proof}
Since $j=1$, we see that $p \equiv 1 \pmod 3$. As $p \in S$, we have
\[
\sigma(p^2)=p^2+p+1\equiv 1^2+1+1\equiv 0\pmod 3.
\]
To see that $p$ does not contribute 3 twice, note that $p$ is one of $1,4$, or $7$ modulo $9$ and in all cases $p^2+p+1$ is 3 modulo $9$. Thus, in no case can $p$ contribute 3 twice.
\end{proof}

We now illustrate our newly-defined notation with the following example.

\begin{exmp}

Suppose that $7^2,107^2,557^2\mid\mid N$ and that $13^4,127^4,6343^4 \mid N$.
We see that
\[
\sigma(557^2)= 557^2+557+1=7^2\cdot 6343,
\]
so 557 contributes 7 twice.
Since $557,7 \in S$ and $557 \equiv 2 \pmod 3$, this means that $557 \in S_{3,2}^{S,S}$. % Do we want the math to not be over a line break
We also have $6343 \in T$, so $557 \in S_{3,2}^T$, and $557 \in S_{3,2}^{S,S,T}$.
Since
\[
\sigma(107^2)=107^2+107+1=7\cdot13\cdot127
\]
and $7 \in S$ we have that 107 is an element of all of the following sets:
\[
S_{3,2}^S,\ S_{3,2}^T,\ S_{3,2}^{T,T}, \text{ and } S_{3,2}^{S,T,T}.
\]
However, since  $13^4\mid N$ and $127^4\mid N$, we have $13,127 \in T$, so $13,127\not\in S$, meaning that $107 \not\in S_{3,2}^{S,S}$.
Similarly, since $7\in S$ we have $7 \not\in T$, so $557 \not \in S_{3,2}^{T,T}$.
\end{exmp}

In order to more easily discuss prime divisors of $N$ without regard to their value modulo $3$, we make the following definition.
\begin{definition}
Let
\[
S_m^{U_1,\dots,U_n} \coloneqq S_{m,1}^{U_1,\dots,U_n} \cup S_{m,2}^{U_1,\dots,U_n}.
\]
\end{definition}

Our methods work best when we consider small values of $m$.
Thus, we introduce the following notation to consolidate the contrary cases.
\begin{definition}
Let
\[
S_{\geq k,j}^{U_1,\dots, U_n}:=\bigcup_{m\geq k} S_{m,j}^{U_1,\dots, U_n},
\]
for a positive integer $k$.
\end{definition}

In \cite{Zel1}, Zelinsky proves that the following inequalities hold:
\begin{align*}
\abs{S_1}+\abs{S_{2,2}} &\leq \abs{T}+\abs{S_{2,1}}+\abs{S_{\geq 3,1}}+1 \\
\abs{S_1} &\leq \abs{T}+\abs{S_{\geq 3,1}}+1
\end{align*}
In doing so, he shows that if $p_1\in S_1$ and $p_2\in S_2$, then the largest contributed prime of $p_2$ is not contributed by $p_1$.
Furthermore, if $p_3,p_4\in S_{2,2}$, then the largest contributed prime of $p_3$ is also not the largest contributed prime of $p_4$.
These results lead to the following definition.
\begin{definition}
For each $p\in S_1 \cup S_2\cup S_{3,1}$, we define its \textit{linked prime} $\ell_p$ as follows.
Let $\ell_p$ be the largest prime contributed by $p$, except in the case where $p \in S_{2,2}$ and the largest prime contributed by $p$ is contributed by an element of $S_{2,1}$ as well.
In this special case, we will take $\ell_p$ to be the smaller prime contributed by $p$ instead.
\end{definition}

The inequalities above hold if the linking map $p\mapsto \ell_p$ is injective when considered separately over the domains $S_1\cup S_{2,1}$ and $S_1 \cup S_{2,2}$, respectively.
One of our main results is that the linking map is still injective over the union of these domains, which we prove in Lemma \ref{Lemma:LinkInjective}.

\section{Lemmas for linked primes}
We start with a well-known fact that was mentioned on page $2436$ of \cite{O&R}.
We leave the easy proof to the motivated reader.
\begin{lemma}\label{Lemma:ModularityOfSigma}
Let $a$, $b$, and $c$ be primes such that
\[
a\mid \sigma(b^{c-1}).
\]
Then, either $a=c$ or $a \equiv 1 \pmod c$.
In particular, if $c=3$, then either $a=3$ or $a\equiv 1\pmod 3$.
\end{lemma}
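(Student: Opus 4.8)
\emph{Proof proposal.} The plan is to pass from a divisibility of $\sigma(b^{c-1})$ to a statement about the multiplicative order of $b$ modulo $a$, and then exploit the primality of $c$. First I would write
\[
\sigma(b^{c-1}) = 1 + b + \cdots + b^{c-1} = \frac{b^c-1}{b-1},
\]
so that $(b-1)\sigma(b^{c-1}) = b^c - 1$. If $a \mid \sigma(b^{c-1})$, then $a \mid b^c-1$, hence $a \nmid b$ and $b^c \equiv 1 \pmod a$. Let $d$ denote the multiplicative order of $b$ modulo $a$; then $d \mid c$, and since $c$ is prime this forces $d = 1$ or $d = c$.

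In the case $d = c$, Fermat's little theorem gives $d \mid a-1$, so $c \mid a-1$, i.e.\ $a \equiv 1 \pmod c$, which is one of the two desired conclusions. In the case $d = 1$, we have $b \equiv 1 \pmod a$, and then reducing the sum term by term,
\[
\sigma(b^{c-1}) = 1 + b + \cdots + b^{c-1} \equiv \underbrace{1 + 1 + \cdots + 1}_{c} = c \pmod a.
\]
Since $a \mid \sigma(b^{c-1})$, this yields $a \mid c$, and as both $a$ and $c$ are prime we conclude $a = c$. Combining the two cases proves the main assertion, and the ``in particular'' statement is just the special case $c = 3$.

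There is no real obstacle here; the only point requiring a little care is the clean reduction to the two cases $d \in \{1, c\}$, which is exactly where the hypothesis that $c$ is prime is used, and ensuring that the $d=1$ case is handled separately since there the sum collapses to $c$ rather than giving information about $a-1$. This is why the lemma is stated as ``$a = c$ \emph{or} $a \equiv 1 \pmod c$'' rather than a single congruence.
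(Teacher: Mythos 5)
Your proof is correct and complete; the paper deliberately omits the proof of this lemma (``we leave the easy proof to the motivated reader''), and your argument via the multiplicative order $d$ of $b$ modulo $a$ is exactly the standard one intended, going back to the fact cited from Ochem and Rao. The case split $d\in\set{1,c}$ (valid because $c$ is prime), with $d=c$ giving $a\equiv 1\pmod c$ by Fermat and $d=1$ collapsing the sum to $c$ so that $a=c$, correctly accounts for both alternatives in the conclusion.
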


We will also often use the following simple fact without comment.
\begin{lemma}\label{Lemma:SimplifyingLemma}
Let $a$, $b$, $c$, and $d$ be positive integers with $b\geq c$.
If $a^2+a+1=bcd$, then
\[
b > \frac{a}{\sqrt{d}}.
\]

\end{lemma}

\begin{proof}
We have
\[
b^2 \geq bc = \frac{a^2+a+1}{d}
\]
and so
\[
b\geq\sqrt{\frac{a^2+a+1}{d}} > \frac{a}{\sqrt{d}}. \qedhere
\]
\end{proof}

The next lemma is a key tool that is used repeatedly in the proofs that follow.

\begin{lemma}\label{Lemma:Factorization1}
Let $a,b,c,d$, and $e$ be positive integers, with $c$ prime, that satisfy
\[
a^2+a+1=cd,
\]
\[
b^2+b+1=ce\text{, and}
\]
\[
c>d>e.
\]
Then, $c=a+b+1$  and $a-b=d-e$.
\end{lemma}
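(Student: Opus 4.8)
The plan is to work modulo $c$ and exploit the fact that $a$ and $b$ are roots of the same quadratic $x^2+x+1$ in $\mathbb{Z}/c\mathbb{Z}$. From $a^2+a+1 \equiv 0 \pmod c$ and $b^2+b+1 \equiv 0 \pmod c$, subtracting gives $a^2 - b^2 + a - b \equiv 0$, i.e. $(a-b)(a+b+1) \equiv 0 \pmod c$. Since $c$ is prime, either $c \mid a - b$ or $c \mid a+b+1$. The first alternative I would rule out using the size constraints: from $a^2+a+1 = cd$ with $c > d$ we get $c^2 > cd = a^2+a+1 > a^2$, so $c > a$; likewise $c > b$; hence $|a-b| < c$, and if $c \mid a-b$ this forces $a = b$, which would give $d = e$, contradicting $d > e$. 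Therefore $c \mid a + b + 1$.

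Next I would pin down $a+b+1$ exactly as $c$ (rather than a larger multiple). Since $c > a$ and $c > b$, we have $0 < a+b+1 < 2c$, and as $c \mid a+b+1$ with $a+b+1$ positive, the only possibility is $a+b+1 = c$. That establishes the first claimed identity.

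For the second identity, substitute $c = a+b+1$ back into the two defining equations. From $a^2+a+1 = (a+b+1)d$ and $b^2+b+1 = (a+b+1)e$, subtract to obtain $(a^2+a) - (b^2+b) = (a+b+1)(d-e)$, and the left side factors as $(a-b)(a+b+1)$. Dividing through by $a+b+1$ (which is positive, hence nonzero) yields $a - b = d - e$, as desired. Note this also shows $a \geq b$ consistently with $d \geq e$.

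The computation here is entirely routine; the only point requiring genuine care is the deployment of the hypothesis $c > d > e$ (equivalently the size bounds it forces) at two separate moments — first to exclude the spurious factor $c \mid a-b$, and then to collapse $a+b+1$ from an unknown positive multiple of $c$ down to $c$ itself. I expect no real obstacle beyond making sure those inequalities are invoked cleanly. Without the ordering hypothesis the statement genuinely fails (one could have $a = b$), so it is essential that both uses are made explicit.
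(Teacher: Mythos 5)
Your proposal is correct and follows essentially the same route as the paper: factor the difference of the two equations as $(a-b)(a+b+1)$, use $c>a$ (and $c>b$) to kill the $c\mid a-b$ branch, bound $a+b+1<2c$ to force $c=a+b+1$, and then divide out to get $a-b=d-e$. Your explicit handling of the $a=b$ possibility is a slightly more careful touch than the paper's, but the argument is otherwise the same.
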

\begin{proof}
Since $c>d$ and $cd>a^2$, it follows that $c>a$. Note that
\[
c \mid (a^2+a+1)-(b^2+b+1)=(a-b)(a+b+1).
\]
As $c$ is prime, either $c\mid a-b$ or $c\mid a+b+1$.
Clearly, $c \nmid a-b$, as $c>a$.
Hence, $c\mid a+b+1$.
However, note that $2c>2a\geq a+b+1$, as $a>b$.
Therefore, we must have that $c=a+b+1$.
Then, since
\[
(a-b)c=(a-b)(a+b+1)=(a^2+a+1)-(b^2+b+1) = cd-ce = c(d-e),
\]
we have that $a-b=d-e$. \qedhere

\end{proof}

We next show that the largest prime contributed by an element of $S_2$ is not contributed by an element of $S_1$.

\begin{lemma}[{cf.\ \cite[Lemma 3]{O&R}}]\label{Lemma:ZelProof1}
Let $a$, $b$, $c$, and $d$ be positive integers, with $c>d>1$, and with $c$ prime. If
\[
    a^2+a+1=cd \quad \text{and}
\]
\[
    b^2+b+1=c,
\]
then $a=b^2$.
In particular, $a$ is not prime in this case.
\end{lemma}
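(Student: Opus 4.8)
The plan is to combine the two hypotheses to eliminate $c$ and then factor the resulting expression. From $b^2+b+1 = c$ and $a^2+a+1 = cd$ we get
\[
a^2+a+1 = (b^2+b+1)d,
\]
so in particular $b^2+b+1 \mid a^2+a+1$. The natural move is to reduce $a$ modulo $b$, or better, to look for the factorization $a = b^2$ directly: if $a = b^2$, then $a^2+a+1 = b^4+b^2+1 = (b^2+b+1)(b^2-b+1)$, which is exactly of the required form with $d = b^2-b+1$, and one checks $c = b^2+b+1 > b^2-b+1 = d > 1$ since $b \geq 1$ (in fact $b \geq 2$, as $c = b^2+b+1$ is prime forces $b \neq 1$). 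So the content of the lemma is the converse: the \emph{only} way to have $a^2+a+1 \equiv 0 \pmod{b^2+b+1}$ with the size constraint $c > d$ is $a = b^2$.

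To prove that, I would first establish $b < a$: since $cd = a^2+a+1$ and $c > d \geq 1$ we have $c^2 > cd > a^2$, hence $c > a$, i.e. $b^2+b+1 > a$; combined with $d > 1$ this gives $a^2 + a + 1 = cd \geq 2c = 2(b^2+b+1) > 2a$, so actually... hmm, that just re-gives $a>b$ roughly. More usefully: from $b^2+b+1 > a$ we do \emph{not} immediately get $a < b^2$, so let me instead work with the divisibility. Write $a^2+a+1 \equiv 0 \pmod{b^2+b+1}$. Observe that $b^4+b^2+1 = (b^2+b+1)(b^2-b+1) \equiv 0$, so $a^2+a+1 \equiv a^2+a+1 - (b^4+b^2+1) = (a^2-b^4) + (a-b^2) = (a-b^2)(a+b^2+1) \pmod{b^2+b+1}$. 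Hence
\[
b^2+b+1 \mid (a-b^2)(a+b^2+1).
\]
Now I want to argue that $a = b^2$. Since $c = b^2+b+1$ is prime, either $c \mid a - b^2$ or $c \mid a + b^2 + 1$. In the first case, if $a \neq b^2$ then $|a - b^2| \geq c = b^2+b+1$, forcing $a \geq 2b^2 + b + 1$ (the case $a \leq b^2 - b - 1$ being incompatible with... I would need to rule it out using $a > $ something); but then $a^2 > (2b^2)^2 = 4b^4$ while $cd < c^2 = (b^2+b+1)^2 < (2b^2)^2$ for $b$ large — a size contradiction I'd make precise. In the second case $c \mid a + b^2 + 1$ with $0 < a + b^2 + 1$; since $a < c = b^2+b+1$ we get $a + b^2 + 1 < 2b^2 + b + 2 < 2c$, so $a + b^2 + 1 = c = b^2 + b + 1$, giving $a = b$ — but then $d = 1$, contradicting $d > 1$. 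So only the first case survives and it forces $a = b^2$.

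The main obstacle will be handling the first case cleanly: showing $c \mid a - b^2$ together with the size bounds $a < c$ and $cd = a^2+a+1$ (with $d \geq 2$) leaves $a = b^2$ as the only possibility, i.e. ruling out $a = b^2 + (b^2+b+1)k$ for $k \geq 1$ and $a = b^2 - (b^2+b+1)k$ for $k \geq 1$. The upper-range case is killed by $a < c = b^2+b+1 < 2b^2 \leq b^2 + (b^2+b+1)$, so in fact $0 \leq a - b^2 < c$ already forces $a - b^2 \in \{0\}$ once we know $a \geq b^2$; and $a \geq b^2$ follows because if $a < b^2$ then $a \leq b^2 - 1$ and $a - b^2 \in (-c, 0)$ so $c \mid a-b^2$ forces $a = b^2$ anyway, contradiction — so the divisibility alone, via $|a - b^2| < c$, pins down $a = b^2$ immediately, provided I first prove $|a - b^2| < c = b^2+b+1$. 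That inequality is the crux: $a < c$ gives $a - b^2 < b+1 < c$, and for the lower bound I need $a > b^2 - (b^2+b+1) = -(b+1)$, which is automatic. So the real work is just: (i) $c > a$ from $c > d$ and $cd = a^2+a+1$; (ii) the cyclotomic identity $b^4+b^2+1 = (b^2+b+1)(b^2-b+1)$; (iii) conclude $c \mid (a-b^2)(a+b^2+1)$, then split on the prime $c$, using $a+b^2+1 < 2c$ to force either $a = b^2$ or ($a = b$, contradicting $d>1$). I'd close by noting $a = b^2$ is visibly not prime (as $b \geq 2$), which also consistency-checks against Lemma \ref{Lemma:Factorization1} applied with the roles reversed.
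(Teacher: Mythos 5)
Your proof is correct, but it takes a genuinely different route from the paper's. The paper simply invokes Lemma \ref{Lemma:Factorization1} with $e=1$: from $c \mid (a^2+a+1)-(b^2+b+1)=(a-b)(a+b+1)$, the primality of $c$, and the bound $c>a$, it extracts $c=a+b+1$, whence $b^2+b+1=a+b+1$ and $a=b^2$. You instead subtract $b^4+b^2+1=(b^2+b+1)(b^2-b+1)$, i.e.\ you use the cyclotomic identity $\Phi_3(b^2)=\Phi_3(b)\Phi_6(b)$, to get $c\mid (a-b^2)(a+b^2+1)$ and then hit $a=b^2$ directly: the branch $c\mid a-b^2$ forces $a=b^2$ because $|a-b^2|<c$ (using $c>a$, which you correctly derive from $c>d$ and $cd>a^2$), while the branch $c\mid a+b^2+1$ forces $a+b^2+1=c$, hence $a=b$ and $d=1$, contradicting $d>1$. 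Both proofs rest on the same two ingredients (the bound $c>a$ and primality of $c$ applied to a factored difference of values of $x^2+x+1$), but the paper's choice of difference yields the reusable relation $c=a+b+1$ that also drives Lemmas \ref{Lemma:ZelProof2} and \ref{Lemma:Unique between S1 and S2}, whereas yours is self-contained and lands on the conclusion in one step. The exploratory middle of your write-up wavers (the abandoned size estimate for $|a-b^2|\geq c$), but your closing summary assembles a complete, valid argument; the only cosmetic slip is asserting $b\geq 2$ when concluding $a=b^2$ is not prime --- $b=1$ would give the equally non-prime $a=1$, so nothing is lost.
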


\begin{proof}
Taking $e=1$, we can apply Lemma \ref{Lemma:Factorization1} to get that $c=a+b+1$.
This implies that
\[
b^2+b+1=a+b+1,
\]
so $a=b^2$.
Thus $a$ is not prime.
\end{proof}

We next show that two distinct elements of $S_{2,2}$ never share their largest contributed prime.
\begin{lemma}[{cf.\ \cite[Lemma 3]{O&R}}]\label{Lemma:ZelProof2}
Let $a$, $b$, $c$, $d$, and $f$ be odd primes greater than $3$, with $c>d>f$.
Then, it does not hold that
\[
a^2+a+1=cd \ \text{and}
\]
\[
b^2+b+1=cf.
\]
\end{lemma}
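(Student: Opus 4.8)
The plan is to argue by contradiction: suppose there are odd primes $a,b,c,d,f>3$ with $c>d>f$, $a^2+a+1=cd$, and $b^2+b+1=cf$. The first move is to notice that $f$ here plays exactly the role of the ``$e$'' in Lemma~\ref{Lemma:Factorization1}: the hypotheses $a^2+a+1=cd$, $b^2+b+1=cf$, and $c>d>f$ (with $c$ prime) match that lemma verbatim. So I would invoke it immediately to conclude $c=a+b+1$ (the accompanying identity $a-b=d-f$ will not be needed).

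The point is then that the two sides of $c=a+b+1$ carry incompatible information modulo $3$. On the one hand, $c$ divides $\sigma(a^2)=\sigma(a^{3-1})$, so Lemma~\ref{Lemma:ModularityOfSigma} (taking its modulus to be $3$) forces $c=3$ or $c\equiv 1\pmod 3$; since $c>3$, we get $c\equiv 1\pmod 3$. On the other hand, $c>3$ and $d>3$ give $3\nmid cd=a^2+a+1$, and for a prime $a>3$ one has $a^2+a+1\equiv 0\pmod 3$ exactly when $a\equiv 1\pmod 3$, so $3\nmid a^2+a+1$ forces $a\equiv 2\pmod 3$. The identical argument applied to $b^2+b+1=cf$ gives $b\equiv 2\pmod 3$. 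Hence $c=a+b+1\equiv 2+2+1\equiv 2\pmod 3$, contradicting $c\equiv 1\pmod 3$, and the lemma follows.

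I do not expect a real obstacle here; the only things to be careful about are checking that Lemma~\ref{Lemma:Factorization1} applies as stated (it does, since $c$ is prime and $c>d>f$) and recording the elementary facts that $c\neq 3$ and $d\neq 3$, which is what licenses reducing $a^2+a+1$ modulo $3$ to pin down $a\bmod 3$. Conceptually, Lemma~\ref{Lemma:Factorization1} converts the multiplicative coincidence into the additive identity $c=a+b+1$, and then the Fermat-type constraint of Lemma~\ref{Lemma:ModularityOfSigma} on $c$ is simply incompatible with that identity once everything is read modulo $3$.
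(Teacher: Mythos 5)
Your proposal is correct and matches the paper's proof essentially step for step: both apply Lemma~\ref{Lemma:Factorization1} with $e=f$ to get $c=a+b+1$, deduce $a\equiv b\equiv 2\pmod 3$ from $3\nmid a^2+a+1$ (and likewise for $b$), and derive the contradiction $1\equiv c\equiv 2\pmod 3$. Your explicit appeal to Lemma~\ref{Lemma:ModularityOfSigma} to justify $c\equiv 1\pmod 3$ is the same fact the paper uses implicitly, so there is nothing to add.
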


\begin{proof}
By way of contradiction, suppose that the equalities do hold.
Then, since $3\nmid a^2+a+1$, we have that $a\equiv 2\pmod 3$.
Similarly, $b\equiv 2 \pmod 3$.
By applying Lemma \ref{Lemma:Factorization1} with $e=f$, we have $a+b+1=c$, and so
\[
1 \equiv c = a+b+1 \equiv 2\pmod 3,
\]
a contradiction.
\end{proof}

Two distinct elements of $S_{2,1}$ clearly have distinct largest contributed primes.
However, the case remains when a prime $p_1\in S_{2,1}$ and another prime in $p_2 \in S_{2,2}$ may have the same largest contributed prime.
For example, if $p_1=7$ and $p_2=11$, then we have $\sigma(7^2)=3\cdot19$ and $\sigma(11^2) = 7\cdot19$.
Nevertheless, we prove below that if this occurs, the smaller contributed prime from $p_2$ is distinct from all of the largest contributed primes from $S_1\cup S_2$.
Further, we will see in Corollary \ref{Corollary:Uniqueness} that the smaller contributed prime from $p_2$ is not contributed in this same way by another similar pair of primes.

\begin{lemma}\label{Lemma:Unique between S1 and S2}
Suppose we have odd primes $a$, $b$, $c$, and $d$, with $c>d$ and $d\neq 3$, that satisfy the equations
\[
a^2+a+1=cd \quad \text{and}
\]
\[
b^2+b+1=3c.
\]
Then, there does not exist any odd prime $g$ such that $g^2+g+1=dh$, with $d>h$, where $h=1$ or $h$ is an odd prime.
\end{lemma}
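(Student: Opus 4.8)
The plan is to use Lemma~\ref{Lemma:Factorization1} to pin down the exact relationship among $a,b,c,d$ forced by the two hypotheses, to repackage that relationship as a single identity constraining $d$, and then to rule out $g$ by a parity argument.

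First I would note that $d\ge 5$, since $d$ is an odd prime and $d\ne 3$; hence $c>d>3$, so Lemma~\ref{Lemma:Factorization1} applies to $a^2+a+1=cd$ and $b^2+b+1=3c$ with its parameter ``$e$'' taken equal to $3$. This gives $c=a+b+1$ and $a-b=d-3$, so that $s\coloneqq a-d=b-3$ is an even integer (even because $a$ and $d$ are odd), and in fact $s\ge 2$ since $a>d$ (if $a\le d$ then $cd\ge (d+1)d>a^2+a+1$, a contradiction). Substituting $c=a+b+1$ into $b^2+b+1=3c$ gives $3a=b^2-2b-2$; then, using $d=a-b+3$,
\[
3d=3a-3b+9=b^2-5b+7=(b-3)^2+(b-3)+1=s^2+s+1 .
\]
Since $d\ge 5$ we get $s^2<s^2+s+1=3d<d^2$, so $s<d$.

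Now suppose for contradiction that an odd prime $g$ exists with $g^2+g+1=dh$ and $d>h\ge 1$. Then $g^2+g+1=dh\le d(d-1)<d^2$, so $g<d$. Subtracting the identity $s^2+s+1=3d$ from $g^2+g+1=dh$ shows that
\[
d\mid (g^2+g+1)-(s^2+s+1)=(g-s)(g+s+1),
\]
so $d$, being prime, divides $g-s$ or $g+s+1$. Because $0\le |g-s|<d$ and $0<g+s+1<2d$ (both from $g<d$ and $s<d$), this forces either $g=s$ or $g=d-s-1$. But $s$ is even and $d$ is odd, so both $s$ and $d-s-1$ are even; in either case $g$ is even, contradicting that $g$ is an odd prime.

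I expect the only real content — and the main obstacle — to be the derivation of the identity $3d=s^2+s+1$ with $s=a-d$ \emph{even}: this is what converts the somewhat opaque system into something a parity argument can kill. After that the finish is essentially automatic, with the only care needed being (i) verifying the hypothesis $d>3$ of Lemma~\ref{Lemma:Factorization1} (which is exactly why $d\ne 3$ is assumed) and (ii) checking that the bounds $s<d$ and $g<d$ are sharp enough to promote the divisibility $d\mid(g-s)(g+s+1)$ to the two explicit possibilities $g=s$ and $g=d-s-1$.
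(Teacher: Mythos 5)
Your proof is correct, and after the shared opening step it takes a genuinely different and noticeably cleaner route than the paper's. Both arguments begin the same way: Lemma~\ref{Lemma:Factorization1} with $e=3$ gives $c=a+b+1$ and $a-b=d-3$, hence $3d=b^2-5b+7$. The paper then splits into three cases according to $g\bmod 3$; the residue-$1$ case is dispatched by solving a quadratic to find $g=b-3$ (even), but the residue-$2$ case requires establishing $2d>a$ and $g<a$, factoring $(a-g)(a+g+1)$, and a mod-$6$ computation to reach the contradiction $5d\le a+g+1<4d$. You instead observe that $3d=b^2-5b+7=s^2+s+1$ with $s=b-3$ even, subtract this identity from $g^2+g+1=dh$ to get $d\mid(g-s)(g+s+1)$, and use only the easy bounds $g<d$ and $0\le s<d$ to conclude $g=s$ or $g=d-s-1$, both even. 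This collapses the paper's three cases — including the delicate mod-$6$ step — into a single parity argument; the gain comes from comparing $g$ with the auxiliary even integer $s$ satisfying $s^2+s+1=3d$ rather than with $a$, which is why the required size estimates become trivial. One cosmetic point: your justification of $s\ge 2$ via ``$cd\ge(d+1)d>a^2+a+1$'' fails at the boundary $a=d$ (where the last inequality is false), but $a=d$ is impossible anyway since $d\mid a^2+a+1$ would force $d\mid 1$; and in fact the argument only needs $s=b-3\ge 0$ and $s$ even, both of which are immediate.
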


\begin{proof}
By Lemma \ref{Lemma:Factorization1}, we have $c=a+b+1$ and $d-3=a-b$.
Then, we have
\begin{align*}
    b^2+b+1&=3c =3(a+b+1)=3a+3b+3,
\end{align*}
and so $a=(b^2-2b-2)/ 3$.
We also have
\begin{align}\label{Eq:QuadraticEquation}
    d&= \frac{b^2-2b-2}{3}-b+3= \frac{b^2-5b+7}{3}.
\end{align}
Then, suppose $g$ is an odd prime such that $g^2+g+1=dh$ with $d>h$.
Then, we have three cases.

\vspace{5mm}

\noindent\textbf{Case 1}: Suppose that $g \equiv 0 \pmod 3$.
Then, $g=3$, so we find that $d=13$ and $h=1$.
We know that $d=(b^2-5b+7)/3$, so $0=b^2-5b-32$.
But this has no integer solutions, a contradiction.

\vspace{5mm}

\noindent\textbf{Case 2}: Suppose that $g\equiv 1\pmod 3$.
Then, $g^2+g+1\equiv 0\pmod 3$, so
\[
g^2+g+1=3d=b^2-5b+7.
\]
Solving for $g$ with the quadratic equation, we find that the only positive solution is $g=b-3$.
However, this contradicts the fact that $g$ is odd.

\vspace{5mm}

\noindent\textbf{Case 3}: Suppose that $g\equiv 2\pmod 3$.
We will show that $2d>a$. To that end, observe that
\[
2d-a = \frac{2b^2-10b+14}{3}-\frac{b^2-2b-2}{3} = \frac{b^2-8b+16}{3}=\frac{(b-4)^2}{3},
\]
which is greater than $0$, since $b\neq 4$.
Since
\[
g^2+g+1 =dh <d^2 <cd = a^2+a+1
\]
we have $g<a$.
Thus $2d>a$ and we have that
\begin{equation}\label{Eq:a+g+1Bound}
a+g+1 < 2a < 4d
\end{equation}
Hence, from our earlier factorization, we have
\[
d(c-h)=cd-dh = (a^2+a+1)-(g^2+g+1) = (a-g)(a+g+1),
\]
which is positive.
Hence, $d\mid a-g$ or $d\mid a+g+1$.
If $d\mid a-g$, then $d=a-g$, as $2d>a$.
However, this means $d \equiv 0 \pmod 3$, a contradiction.
Thus, we have $d\mid a+g+1$, i.e., $kd=a+g+1$ for some $k\in \N$.
We see that $a\equiv 5\pmod 6$ and $g \equiv 5\pmod 6$.
Thus, $a+g+1\equiv 5\pmod 6$.
Thus, $k \equiv 5 \pmod 6$ since $d\equiv 1\pmod 6$.
However, this means that $5d \leq a+g+1$, contradicting \eqref{Eq:a+g+1Bound}.
\end{proof}

The following corollary shows that in the exceptional case of the definition of linked primes, the smaller contributed prime uniquely determines the other primes.

\begin{corollary}\label{Corollary:Uniqueness}
Given an odd prime $d$, if there exist odd primes $a,b,$ and $c$ that satisfy
\begin{itemize}
    %  \item[\textup{(1)}] $a \equiv 2 \pmod 3$
    %  \item[\textup{(2)}] $b \equiv 1 \pmod 3$
     \item[\textup{(1)}] $a^2+a+1=cd$,
     \item[\textup{(2)}] $b^2+b+1=3c$, and
     \item[\textup{(3)}] $c>d$,
\end{itemize}
they are unique.

\end{corollary}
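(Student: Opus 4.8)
The plan is to read off uniqueness from the computation already done inside the proof of Lemma~\ref{Lemma:Unique between S1 and S2}. There, from equations~(1)--(3) — and using $d\neq 3$, so that Lemma~\ref{Lemma:Factorization1} applies to the common prime $c$ with the chain $c>d>3$ — one obtains $c=a+b+1$, then $a=(b^2-2b-2)/3$, and finally, as in~\eqref{Eq:QuadraticEquation},
\[
d=\frac{b^2-5b+7}{3}.
\]
Thus everything reduces to showing that this relation recovers $b$ from $d$; the other two relations then give $a$ and $c$.

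So first I would solve $3d=b^2-5b+7$ for $b$: this is the quadratic $b^2-5b+(7-3d)=0$, with roots $\tfrac12\bigl(5\pm\sqrt{12d-3}\bigr)$. Since $d\geq 5$, the product of the roots is $7-3d<0$, so they have opposite signs; as $b$ is a positive integer it must be the positive one, $b=\tfrac12\bigl(5+\sqrt{12d-3}\bigr)$. With $b$ thereby determined, $a=(b^2-2b-2)/3$ determines $a$ and $c=a+b+1$ determines $c$. Hence $(a,b,c)$ depends only on $d$, as claimed.

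The one point that genuinely needs care — and the only real obstacle — is the hypothesis $d\neq 3$ that makes the quoted derivation legitimate: without it Lemma~\ref{Lemma:Factorization1} does not apply, the displayed formula for $d$ fails, and uniqueness truly breaks down (for instance both $(a,b,c)=(7,7,19)$ and $(13,13,61)$ occur with $d=3$). In the setting where the corollary is used — the exceptional case of the linked-prime definition — $d$ is the smaller prime contributed by some $p\in S_{2,2}$, and such a $p$ satisfies $p\equiv 2\pmod 3$, whence $3\nmid\sigma(p^2)$ and so $d\neq 3$; thus the restriction is harmless there. Granting it, what remains is only the elementary observation that a quadratic whose roots have opposite signs has exactly one positive root, together with two substitutions.
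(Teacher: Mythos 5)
Your argument is correct and is essentially the paper's own proof: both reduce everything to the relation \eqref{Eq:QuadraticEquation}, solve $3d=b^2-5b+7$ to get the unique positive root $b=\tfrac12\bigl(5+\sqrt{12d-3}\bigr)$, and then recover $c$ and $a$ by substitution. Your side remark that the statement implicitly requires $d\neq 3$ (the paper's proof likewise invokes $d>3$, inherited from Lemma~\ref{Lemma:Unique between S1 and S2}) is a fair and correct observation, and your check that this hypothesis holds where the corollary is applied is a nice addition, but it does not change the substance of the argument.
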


\begin{proof}

From \eqref{Eq:QuadraticEquation} we get that $d=(b^2-5b+7)/3$.
By the quadratic equation,
\[
b=\frac{1}{2} \left(5\pm \sqrt{12 d-3}\right).
\]
Since $b>0$ and $d>3$, the only solution is
\[
b=\frac{1}{2}\left(5+\sqrt{12d-3} \right).
\]
Then, since $b^2+b+1=3c$, we can write $c$ entirely in terms of $d$.
Likewise, since $a^2+a+1=cd$, we can solve for $a$ in terms of $d$ using the quadratic equation.
Since $a>0$, we have
\[
a=\frac{1}{2} \left(-1 + \sqrt{4 d^2+4d \sqrt{12 d-3} +12 d-3}\right).
\]
Thus, $a$, $b$, and $c$ are all uniquely determined by $d$.
\end{proof}

Using the above lemmas we show that the linking map from $S_1\cup S_2$ to $Q$ is injective.
\begin{lemma}\label{Lemma:LinkInjective}
The linking map $ \ell: S_1 \cup S_2 \to Q$ defined by the rule $p \mapsto \ell_p$ is injective.
\end{lemma}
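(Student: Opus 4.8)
The plan is to reduce the statement to one new configuration and then close it out with Lemma \ref{Lemma:Unique between S1 and S2}. Decompose the domain as $S_1\cup S_2=(S_1\cup S_{2,1})\cup(S_1\cup S_{2,2})$. The restriction of the linking map to each of these two pieces is injective---this is exactly what makes Zelinsky's inequalities go through, and all the ingredients have been set up above: distinct elements of $S_1$, and distinct elements of $S_{2,1}$, automatically have distinct linked primes; collisions of largest-contributed-prime type are ruled out by Lemmas \ref{Lemma:ZelProof1} and \ref{Lemma:ZelProof2}; a collision between the exceptional (smaller) linked prime of an $S_{2,2}$ element and a largest contributed prime of an element of $S_1\cup S_2$ is ruled out by Lemma \ref{Lemma:Unique between S1 and S2}; and a collision between the exceptional linked primes of two $S_{2,2}$ elements is ruled out by Corollary \ref{Corollary:Uniqueness}. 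Hence any hypothetical failure of injectivity on $S_1\cup S_2$ must be witnessed by $p\neq p'$ with, after relabelling, $p\in S_{2,1}$ and $p'\in S_{2,2}$; I would assume such a pair exists with $\ell_p=\ell_{p'}=:q$ and derive a contradiction.

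Next I would record the two $\sigma$-values. Since $p\in S_{2,1}$, Lemma \ref{Lemma:OnlyOne3} gives $\sigma(p^2)=p^2+p+1=3q$ with $q$ prime and $q>3$, so $\ell_p=q$. Since $p'\in S_{2,2}$ we have $3\nmid\sigma(p'^2)$, hence $\sigma(p'^2)=p'^2+p'+1=q_1q_2$ with $q_1>q_2>3$ both prime---distinct because $x^2+x+1$ lies strictly between $x^2$ and $(x+1)^2$---and by definition $\ell_{p'}=q_1$, unless $q_1$ is contributed by some element of $S_{2,1}$, in which case $\ell_{p'}=q_2$.

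Then I would split on the value of $\ell_{p'}$. If $\ell_{p'}=q_1$, then $q_1=q$, so $q_1$ is contributed by $p\in S_{2,1}$ (because $q\mid\sigma(p^2)$), contradicting the condition under which $\ell_{p'}$ equals $q_1$. If $\ell_{p'}=q_2$, then $q=q_2$, and by the exceptional clause in the definition there is some $p''\in S_{2,1}$ contributing $q_1$; a further application of Lemma \ref{Lemma:OnlyOne3} gives $\sigma(p''^2)=3q_1$. Now apply Lemma \ref{Lemma:Unique between S1 and S2} with $(a,b,c,d)=(p',p'',q_1,q_2)$---its hypotheses $a^2+a+1=cd$, $b^2+b+1=3c$, $c>d$, $d\neq 3$ all hold---to conclude that no odd prime $g$ satisfies $g^2+g+1=q_2h$ with $q_2>h$ and $h=1$ or $h$ an odd prime. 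But $g=p$ with $h=3$ is exactly such a witness, since $p^2+p+1=3q_2$ and $q_2>3$. This contradiction completes the proof.

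The one genuinely delicate step is the reduction in the first paragraph: I must make sure the separate injectivity statements are invoked correctly and that $p\in S_{2,1}$, $p'\in S_{2,2}$ really is the only configuration they leave open. Everything after that is routine bookkeeping feeding into Lemma \ref{Lemma:Unique between S1 and S2}, with the minor checks ($q>3$, $q_1\neq q_2$, and $\sigma(p''^2)=3q_1$) not worth spelling out in a sketch.
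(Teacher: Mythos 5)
Your proposal is correct and follows essentially the same route as the paper: the paper runs a flat nine-case analysis on where $p_1,p_2$ sit, using exactly the lemmas you cite (Lemmas \ref{Lemma:ZelProof1}, \ref{Lemma:ZelProof2}, \ref{Lemma:Unique between S1 and S2} and Corollary \ref{Corollary:Uniqueness}), and its new Case 9 ($p_1\in S_{2,1}$, $p_2\in S_{2,2}$) is resolved precisely as in your final paragraph, by noting $\ell_{p_2}$ must be the smaller contributed prime and invoking Lemma \ref{Lemma:Unique between S1 and S2} with $h=3$ and $g=p_1$. Your reorganization into the two subdomains $S_1\cup S_{2,1}$ and $S_1\cup S_{2,2}$ plus the one cross case is just a tidier packaging of the same argument.
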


\begin{proof}
Suppose that we have $p_1,p_2 \in S_1 \cup S_2$ and suppose that $\ell_{p_1}=\ell_{p_2}$.
Our goal is to show that $p_1=p_2$.
We have nine cases to consider.

\vspace{5mm}

\noindent\textbf{Case 1}: Suppose that $p_1,p_2 \in S_1$.
We have that
\[
p_1^2+p_1+1 = \ell_{p_1} = \ell_{p_2} = p_2^2+p_2+1,
\]
and thus $p_1=p_2$ since $p_1,p_2>0$.

\vspace{5mm}

\noindent\textbf{Case 2}: Suppose that $p_1,p_2 \in S_{2,1}$.
We have that
\[
p_1^2+p_1+1 = 3\ell_{p_1} = 3\ell_{p_2} = p_2^2+p_2+1,
\]
and thus $p_1=p_2$ since $p_1,p_2>0$.

\vspace{5mm}

\noindent\textbf{Case 3}: Suppose that $p_1,p_2 \in S_{2,2}$ and suppose that $\ell_{p_1}$ is the largest prime divisor of $\sigma(p_1^2)$ and that $\ell_{p_2}$ is the largest divisor of $\sigma(p_2^2)$.
This case is handled by Lemma \ref{Lemma:ZelProof2}.

\vspace{5mm}

\noindent\textbf{Case 4}: Suppose that $p_1,p_2 \in S_{2,2}$ and suppose that $\ell_{p_1}$ is the largest prime divisor of $\sigma(p_1^2)$ and that $\ell_{p_2}$ is the smallest divisor of $\sigma(p_2^2)$.
By the definition of linked primes, this means that there is some $p_3 \in S_{2,1}$ that shares its largest prime divisor with $\sigma(p_2^2)$.
Hence, this case is ruled out by Lemma \ref{Lemma:Unique between S1 and S2}.

\vspace{5mm}

\noindent\textbf{Case 5}: Suppose that $p_1,p_2 \in S_{2,2}$, and suppose that $\ell_{p_1}$ and $\ell_{p_2}$ are the smallest prime divisors of $\sigma(p_1^2)$ and $\sigma(p_2^2)$ respectively.
By the definition of linked primes, this means that there is some $p_3 \in S_{2,1}$ that shares its largest prime divisor with $\sigma(p_1^2)$.
Similarly, there is some $p_4 \in S_{2,1}$ that shares its largest prime divisor with $\sigma(p_2^2)$.
Hence, by Corollary \ref{Corollary:Uniqueness} we must have $p_1=p_2$.

\vspace{5mm}

\noindent\textbf{Case 6}: Suppose that $p_1\in S_1$ and $p_2 \in S_{2,1}$.
This case is ruled out by Lemma \ref{Lemma:ZelProof1}, by taking $d=3$.

\vspace{5mm}

\noindent\textbf{Case 7}: Suppose that $p_1 \in S_1$ and $p_2 \in S_{2,2}$, and suppose that $\ell_{p_2}$ is the largest prime divisor of $\sigma(p_2^2)$.
This case is also ruled out by Lemma \ref{Lemma:ZelProof1}.

\vspace{5mm}

\noindent\textbf{Case 8}: Suppose that $p_1 \in S_1$ and $p_2 \in S_{2,2}$, and suppose that $\ell_{p_2}$ is the smallest prime divisor of $\sigma(p_2^2)$.
Then, there exists some $p_3 \in S_{2,1}$  such that $\sigma(p_3^2)$ shares its largest prime divisor with $\sigma(p_2^2)$.
Hence, this case is ruled out by Lemma \ref{Lemma:Unique between S1 and S2}, taking $h=1$.

\vspace{5mm}

\noindent\textbf{Case 9}: Suppose that $p_1\in S_{2,1}$ and $p_2 \in S_{2,2}$.
From the definition of linked primes, this means that $\ell_{p_2}$ is the smallest prime divisor of $\sigma(p_2^2)$.
Hence, there exists some $p_3 \in S_{2,1}$ such that $\sigma(p_3^2)$ shares its largest prime divisor with $\sigma(p_2^2)$.
We see that this is impossible due to Lemma \ref{Lemma:Unique between S1 and S2}, with $h=3$ and $d=\ell_{p_1}$.
\end{proof}

\section{A generalization of the linking map}

As linked primes are one of the most important tools we use in establishing our main inequality between $\Omega$ and $\omega$, we naturally want to extend the domain of the linking map.
However, we were not able to establish an injective map that links all primes in $S$ to their contributed primes in $Q$.

Furthermore, in \cite{H&N} it was shown that a linking map $S_1\cup S_{4,2} \to Q$ may not be injective.
Thus it is hopeless to expect to generalize our injective linking map over the entire domain $S$.
However, we are able to show that our linking map is nearly injective over the domain $S_1\cup S_2\cup S_{3,1}$, in the sense that for every contributed prime $q$, we have that $\abs{\set{p\in S_1\cup S_2 \cup S_{3,1}\; : \; \ell_p=q}} \leq 2$.

We now prove two lemmas analogous to Lemma \ref{Lemma:Factorization1} for use with primes from $S_{3,1}$.
\begin{lemma}\label{Lemma:Factorization2}
Let $a,b,d,e$, and $f$ be positive integers, with $a,b,d$ prime, that satisfy
\begin{align*}
    a^2+a+1=3de \quad \text{and}\\
    b^2+b+1=df
\end{align*}
with $d>e>f$.
Then, $d=a+b+1$ and $a-b=3e-f$.
\end{lemma}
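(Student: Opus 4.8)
The plan is to follow the template of the proof of Lemma~\ref{Lemma:Factorization1}. Subtracting the second hypothesized equation from the first gives
\[
(a-b)(a+b+1) = (a^2+a+1) - (b^2+b+1) = 3de - df = d(3e-f),
\]
and $3e-f>0$ because $e>f$. Since $d$ is prime, it follows that $d\mid a-b$ or $d\mid a+b+1$. The argument then comes down to excluding the first possibility and showing that in the second case $a+b+1$ equals $d$ rather than a larger multiple of $d$.

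First I would collect the elementary size information. From $d>e>f\ge 1$ we get $d\ge 3$; and $d=3$ would force $e=2$, $f=1$, hence $b^2+b+1=3$, which has no prime solution, so in fact $d\ge 5$. From $b^2<b^2+b+1=df<d^2$ we obtain $b<d$, and from $a^2<a^2+a+1=3de<3d^2$ we obtain $a<\sqrt3\,d$. Finally, $3de>df$ (as $3e>f$) gives $a^2+a+1>b^2+b+1$, so $a>b$.

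Next I would rule out $d\mid a-b$: since $0<a-b<a<2d$, this would force $a=b+d$; but $d$ is an odd prime, so if $b$ were odd then $a$ would be even, hence $a=2<b$, which is absurd, so $b=2$, and then $b^2+b+1=7=df$ yields $d=7$, $f=1$, making $a=9$ non-prime, a contradiction. Therefore $d\mid a+b+1$. Using $a<\sqrt3\,d$, $b<d$, and $d\ge 5$ we have $0<a+b+1<(1+\sqrt3)d+1<3d$, so $a+b+1\in\{d,2d\}$. I would then observe that $a$ is odd (else $a=2$ and $a^2+a+1=7$ is not a multiple of $3$) and that $b$ is odd (else $b=2$, and $7=df$ forces $d=7$, $f=1$, $e\in\{2,3,4,5,6\}$, but none of the equations $a^2+a+1=21e$ has an integer solution). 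Then $a+b+1$ is odd and so cannot equal $2d$, forcing $a+b+1=d$. Substituting this into $(a-b)(a+b+1)=d(3e-f)$ and cancelling $d$ gives $a-b=3e-f$, completing the proof.

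The step I expect to be the main obstacle is the exclusion of the ``doubled'' cases $a-b=d$ and $a+b+1=2d$. In Lemma~\ref{Lemma:Factorization1} the analogous difficulty does not arise because there $c>a$; here we only have $a<\sqrt3\,d$, so these cases cannot be dispatched by size considerations alone, and one must genuinely use that $a$, $b$, and $d$ are prime, together with two short finite checks (the cases $d=3$ and $b=2$).
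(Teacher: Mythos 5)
Your proof is correct. It shares the paper's skeleton exactly — factor $(a^2+a+1)-(b^2+b+1)=(a-b)(a+b+1)=d(3e-f)$, conclude $d\mid a-b$ or $d\mid a+b+1$, and then pin down which multiple of $d$ occurs — but the way you exclude the spurious cases is genuinely different. The paper works modulo $3$: from $3\mid a^2+a+1$ it gets $a\equiv 1\pmod 3$ and $d\equiv 1\pmod 3$, which kills $d=a-b$ and $3d=a+b+1$, with parity handling $2d=a+b+1$; its size bound is the cruder $a+b+1\le 2a<4d$. You instead sharpen the size bound to $a+b+1<(1+\sqrt 3)d+1<3d$ by using $b<d$ directly rather than $b<a$, so the $k=3$ case never arises, and you dispose of $d=a-b$ and $2d=a+b+1$ by parity together with finite checks of the degenerate cases $a=2$, $b=2$, $d=3$. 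Both routes work, but yours has a small advantage worth noting: the paper's congruence $a-b\equiv a+b+1-1\equiv 2\pmod 3$ tacitly assumes $b\equiv 2\pmod 3$, which amounts to $3\nmid f$ — true in every application in the paper but not actually among the lemma's hypotheses ($f$ is only a positive integer). Your argument uses nothing beyond the stated hypotheses, so it is, if anything, the more careful of the two.
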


\begin{proof}
From Lemma \ref{Lemma:SimplifyingLemma}, we have that $d>a/\sqrt{3}>a/2$.
Then,
\[
d(3e-f) =3de-df=(a^2+a+1)-(b^2+b+1) = (a-b)(a+b+1).
\]
Hence, either $d\mid (a-b)$ or $d\mid (a+b+1)$.

First, suppose $d\mid (a-b)$.
Then, since $2d>a$, we have that $d=a-b$.
But $1\equiv d =a-b \equiv 2\pmod 3$, a contradiction, so this case cannot occur.

Now, suppose $d\mid (a+b+1)$.
Then, $a+b+1\leq 2a < 4d$ and so $kd=a+b+1$ for some $k\in\set{1,2,3}$.
But $k=3$ yields $3d=a+b+1\equiv 1\pmod 3$, a contradiction.
Similarly, $k=2$ gives $2d=a+b+1\equiv 1\pmod 2$, a contradiction.
Thus $d=a+b+1$, and therefore $a-b=3e-f$.
\end{proof}

\begin{lemma}\label{Lemma:Factorization3}
Let $a,b,d,e,$ and $f$, be positive integers, with $a,b,d$ prime, that satisfy
\begin{align*}
    a^2+a+1=3de \quad \text{and}\\
    b^2+b+1=3df
\end{align*}
with $d\geq e>f$.
Then, $d=(a+b+1)/3$ and $a-b=e-f$.
\end{lemma}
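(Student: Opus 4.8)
The plan is to mirror the proofs of Lemmas~\ref{Lemma:Factorization1} and~\ref{Lemma:Factorization2}: subtract the two hypotheses, use the primality of $d$ to localize a divisibility, and then pin down the correct multiple with a size estimate together with parity and residue-mod-$3$ information. Because the second equation carries the extra factor of $3$, the bookkeeping here is a little cleaner than in Lemma~\ref{Lemma:Factorization2}.

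First I would record the structural facts. Since $3\mid a^2+a+1$ and $3\mid b^2+b+1$, and $x^2+x+1\equiv 0\pmod 3$ forces $x\equiv 1\pmod 3$, we get $a\equiv b\equiv 1\pmod 3$; being primes in this class, $a$ and $b$ are odd, so $a\equiv b\equiv 1\pmod 6$. Also $a^2+a+1$ is odd, so $3de$ is odd, hence $d$ is an odd prime and $e$ is odd; since $e>f\geq 1$ we even have $e\geq 3$. Subtracting the equations,
\[
3d(e-f)=(a^2+a+1)-(b^2+b+1)=(a-b)(a+b+1),
\]
and since $e>f$ the left side is positive, so $a>b$; note that $a-b$ is even. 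Applying Lemma~\ref{Lemma:SimplifyingLemma} to $a^2+a+1=3de$, with $d\geq e\geq 3$ as the two leading factors, gives $d>a/\sqrt{3}>a/2$, so $2d>a$.

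Since $d$ is prime and divides $(a-b)(a+b+1)$, it divides one of the factors. It cannot divide $a-b$: as $0<a-b<a<2d$, that would force $a-b=d$, which is impossible because $a-b$ is even while $d$ is odd. Hence $d\mid a+b+1$. Since $b<a<2d$ we have $0<a+b+1<4d$, so $a+b+1=kd$ with $k\in\{1,2,3\}$. The case $k=2$ is ruled out because $a+b+1$ is odd (both $a,b$ are odd) while $2d$ is even; the case $k=1$ is ruled out because $a+b+1\equiv 0\pmod 3$ would give $3\mid d$, hence $d=3$, contradicting $d=a+b+1>3$. Therefore $k=3$, i.e.\ $d=(a+b+1)/3$. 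Substituting $a+b+1=3d$ into the displayed identity yields $3d(e-f)=(a-b)\cdot 3d$, so $e-f=a-b$, which completes the proof.

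I do not anticipate any real obstacle; this is the most routine of the three factorization lemmas, and essentially all of the content is the mod-$6$-plus-size-bound case analysis that determines $k$. The only spots requiring a little attention are checking that Lemma~\ref{Lemma:SimplifyingLemma} is invoked with its factors in an admissible order (which is why it is worth noting $e\geq 3$ first), and confirming that the primality of $d$ is genuinely used both when extracting the divisibility and when collapsing $3\mid d$ to $d=3$.
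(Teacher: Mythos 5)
Your proof is correct and takes essentially the same approach as the paper's: subtract the two equations, use the primality of $d$ together with the size bound from Lemma \ref{Lemma:SimplifyingLemma} to force $d \mid a+b+1$, and then pin down the right multiple via congruence and parity constraints. The only (cosmetic) difference is in the endgame: the paper first divides the factorization by $9$ using $a\equiv b\equiv 1\pmod 3$ and then gets $(a+b+1)/3<2d$ in one step from $d>a/3$, whereas you enumerate $a+b+1=kd$ with $k\in\set{1,2,3}$ and eliminate $k=1,2$ by mod-$3$ and parity arguments; both are valid.
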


\begin{proof}
From Lemma \ref{Lemma:SimplifyingLemma}, $d\geq a/\sqrt{3}>a/3$.
Then, we have
\[
3d(e-f) =3de-3df=(a^2+a+1)-(b^2+b+1) = (a-b)(a+b+1).
\]
Note that in this case we have $a\equiv b \equiv 1 \pmod 3$.
Hence, $a-b\equiv a+b+1\equiv 0 \pmod 3$, so $d\mid (a-b)/3$ or $d\mid (a+b+1)/3$.
However, $d>a/3>(a-b)/3$, so $d\nmid (a-b)/3$.
It follows that $d\mid (a+b+1)/3$.
Note that
\[
\frac{a+b+1}{3} \leq \frac{2a}{3} < 2d.
\]
Thus, $d=(a+b+1)/3$, and therefore $a-b=e-f$.
\end{proof}

We now show that the largest contributed prime of an element of $S_{3,1}$ is contributed by at most one other element of $S_{3,1}$

\begin{lemma}\label{Lemma:Semi-linked in S31}
There do not exist distinct odd primes $a$, $b$, $c$, $d$, $e$, $f$, and $g$ that satisfy
\begin{enumerate}
    % \item[\textup{(1)}] All of them are $1\!\!\mod 3$
    \item[\textup{(1)}] $a^2+a+1=3de$,
    \item[\textup{(2)}] $b^2+b+1=3df$,
    \item[\textup{(3)}] $c^2+c+1=3dg$, and
    \item[\textup{(4)}] $d\geq e>f>g$.
\end{enumerate}
\end{lemma}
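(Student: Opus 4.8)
The plan is to assume, toward a contradiction, that distinct odd primes $a,b,c,d,e,f,g$ satisfy (1)--(4), and then to apply Lemma \ref{Lemma:Factorization3} to pairs of the three given factorizations. The point is that each such application forces $3d$ to equal a sum of two of the primes $a,b,c$, and two of these three constraints are already incompatible with $a,b,c$ being distinct.

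Concretely, I would first apply Lemma \ref{Lemma:Factorization3} to equations (1) and (2): here $a$, $b$, $d$ are prime, the factorizations $a^2+a+1=3de$ and $b^2+b+1=3df$ are hypotheses (1) and (2), and the required ordering $d\geq e>f$ is part of hypothesis (4). The lemma gives $3d=a+b+1$ (and also $a-b=e-f$, which I will not need). Next I would apply the same lemma to equations (2) and (3), now with $f$ and $g$ playing the roles of the two smaller factors: $b$, $c$, $d$ are prime, the factorizations are hypotheses (2) and (3), and the ordering $d\geq f>g$ follows from (4) since $d\geq e>f$ forces $d>f$. This yields $3d=b+c+1$. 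Subtracting the two identities gives $a+b+1=b+c+1$, hence $a=c$, contradicting the distinctness of $a,b,c,d,e,f,g$.

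I do not expect a genuine obstacle here: all of the arithmetic is packaged inside Lemma \ref{Lemma:Factorization3} (itself proved via Lemma \ref{Lemma:SimplifyingLemma} and a short congruence analysis mod $2$ and mod $3$), so the only care needed is to match the ordering hypotheses correctly for each pair and to notice that two of the three factorizations already suffice --- it is the presence of the third prime $c$ that turns the relation $a=c$ into an actual contradiction rather than a mere constraint. If desired, one could also invoke Lemma \ref{Lemma:Factorization3} on equations (1) and (3) to record $3d=a+c+1$ together with the compatible differences $a-c=e-g$, $(a-b)+(b-c)=a-c$, and $(e-f)+(f-g)=e-g$, but this adds nothing essential to the argument.
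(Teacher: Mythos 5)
Your proof is correct and follows essentially the same route as the paper: two applications of Lemma \ref{Lemma:Factorization3} force $3d$ to equal two different sums of the primes $a,b,c$, whence two of them coincide, contradicting distinctness. The only (immaterial) difference is that you pair equations (1),(2) and (2),(3) to get $a=c$, while the paper pairs (1),(2) and (1),(3) to get $b=c$; your verification of the ordering hypothesis $d\geq f>g$ for the second application is the right point to check and is handled correctly.
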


\begin{proof}
Suppose to the contrary that such primes exist.
By Lemma \ref{Lemma:Factorization3} we have that $d=(a+b+1)/3$ and $d=(a+c+1)/3$.
Thus $b=c$ and so $f=g$, a contradiction.
\end{proof}

We next show that the largest contributed prime of an element from $S_{2,2}$ can only be the largest contributed prime of a single element from $S_{3,1}$.

\begin{lemma}\label{Lemma:Semi-linked between S22 and S31}
There do not exist odd primes $a$, $b$, $c$, $d$, $e$, $f$, and $g$ that satisfy
\begin{enumerate}
    % \item[\textup{(1)}] All of them are $1\!\!\mod 3$
    \item[\textup{(1)}] $a^2+a+1=3de$,
    \item[\textup{(2)}] $b^2+b+1=3df$,
    \item[\textup{(3)}] $c^2+c+1=dg$,
    \item[\textup{(4)}] $d\geq e,f,g$, and
    \item[\textup{(5)}] $e\neq f$.
\end{enumerate}
\end{lemma}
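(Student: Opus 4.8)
The plan is to argue by contradiction: suppose odd primes $a,b,c,d,e,f,g$ satisfying (1)--(5) exist. First I would record the cheap structural facts. Since $3\mid a^2+a+1$ we have $a\equiv 1\pmod 3$, and then (exactly as in the proof of Lemma \ref{Lemma:OnlyOne3}) $3\mid\mid a^2+a+1$; so from $a^2+a+1=3de$ we get $3\nmid d$. Combined with (4) and (5), this rules out $d=3$ (which would force $e=f=3$), so $d\geq 5$. From (1)--(4) I get the bounds $a<\sqrt3\,d$ and $b<\sqrt3\,d$ (since $a^2+a+1=3de\leq 3d^2$, and likewise for $b$) and $c<d$ (since $c^2+c+1=dg\leq d^2$). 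I also note $a\neq c$: otherwise $a=c$ gives $g=3e$, which is composite. Finally, the hypotheses are symmetric under swapping $(a,e)\leftrightarrow(b,f)$, so by (5) I may assume $e>f$, whereupon Lemma \ref{Lemma:Factorization3} applied to (1) and (2) yields
\[
3d=a+b+1.
\]

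The core of the proof is a divisibility-and-size squeeze. Since $d\mid a^2+a+1$ and $d\mid c^2+c+1$, we have $d\mid (a^2+a+1)-(c^2+c+1)=(a-c)(a+c+1)$, and $d$ is prime, so $d\mid a-c$ or $d\mid a+c+1$. If $d\mid a-c$, the bounds give $-d<a-c<2d$, so $a-c\in\{0,d\}$, and $a\neq c$ forces $a=c+d$. If $d\mid a+c+1$, then since $d\geq 5$ we have $0<a+c+1<\sqrt3\,d+d+1<3d$, so $a+c+1\in\{d,2d\}$; the case $a+c+1=d$ would give, via $3d=a+b+1$, that $b=2d+c>2d>\sqrt3\,d$, contradicting $b<\sqrt3\,d$, so $a+c+1=2d$ and hence $b=c+d$. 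Thus in every surviving case $a=c+d$ or $b=c+d$ — an odd prime equal to the sum of the two odd primes $c$ and $d$, i.e.\ an odd number equal to an even number. This contradiction finishes the proof.

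The step I expect to be the main obstacle is making the size estimates sharp enough that divisibility by $d$ pins $a-c$ and $a+c+1$ down to one or two values: this needs $d\geq 5$ (so one must first excise $d=3$ using (4) and (5)) and it uses the asymmetry between the factor $3$ present in (1)--(2) but absent in (3) — which is also exactly what makes a parity obstruction available to close the argument (note that reducing $a=c+d$ modulo $3$ alone yields no contradiction). The situation is analogous to Lemmas \ref{Lemma:ZelProof2} and \ref{Lemma:Semi-linked in S31}, but since the two equations being combined come from $S_{2,2}$ and $S_{3,1}$ one cannot directly invoke Lemma \ref{Lemma:Factorization3} for the pair (1),(3); instead one extracts only the divisibility $d\mid (a-c)(a+c+1)$ and leans on the size information already obtained.
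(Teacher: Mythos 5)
Your proof is correct, but it takes a genuinely different route from the paper's. The paper's proof is three lines: it applies Lemma \ref{Lemma:Factorization2} to the pair of equations (1),(3) and again to the pair (2),(3), obtaining $d=a+c+1$ and $d=b+c+1$, hence $a=b$, hence $e=f$, contradicting (5). You instead combine (1) with (2) via Lemma \ref{Lemma:Factorization3} to get $3d=a+b+1$, and then treat equation (3) by hand, extracting only the raw divisibility $d\mid(a-c)(a+c+1)$ and closing each branch with the size bounds $a,b<\sqrt{3}\,d$, $c<d$ and the parity obstruction that an odd prime cannot equal $c+d$. Your route costs more casework but is more self-contained on the $c$-equation: in particular it does not need any ordering between $e$ (or $f$) and $g$, whereas a literal application of Lemma \ref{Lemma:Factorization2} to (1),(3) would ask for $d>e>g$, which condition (4) does not supply. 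All of your supporting steps check out: $3\nmid d$ (hence $d\geq 5$) follows from $a\equiv 1\pmod 3$ and $9\nmid a^2+a+1$; $a\neq c$ holds because $a=c$ would force $g=3e$ with $e\geq 3$ composite; the interval estimates $-d<a-c<2d$ and $0<a+c+1<3d$ are valid for $d\geq 5$; and the sub-case $a+c+1=d$ is correctly eliminated by $b=2d+c>\sqrt{3}\,d$. So the argument is sound as written; it simply trades the paper's double invocation of a factorization lemma for one invocation plus an explicit squeeze.
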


\begin{proof}
Suppose to the contrary that such primes exist.
By Lemma \ref{Lemma:Factorization2} we have that $d=a+c+1$ and $d=b+c+1$.
Thus $a=b$ and so $e=f$, a contradiction.
\end{proof}

We now show that the largest contributed prime of an element from $S_{3,1}$ is not also the contributed prime of an element from $S_1$.

\begin{lemma}\label{Lemma:Unique between S1 and S31}
There do not exist odd primes $a$, $b$, $d$, and $e$ that satisfy
\begin{enumerate}
    % \item[\textup{(1)}] All of them are $1\!\!\mod 3$
    \item[\textup{(1)}] $a^2+a+1=3de$,
    \item[\textup{(2)}] $b^2+b+1=d$, and
    \item[\textup{(3)}] $d\geq e$.
\end{enumerate}

\end{lemma}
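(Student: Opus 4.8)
The plan is to argue by contradiction, imitating the divisibility computations behind Lemmas \ref{Lemma:Factorization2} and \ref{Lemma:ZelProof1}, but in a way that does not require separating the cases $d>e$ and $d=e$. So suppose odd primes $a,b,d,e$ satisfy (1)--(3). First I would record the elementary facts. Since $3\mid a^2+a+1$ we must have $a\equiv 1\pmod 3$. Since $d=b^2+b+1$ is prime and $b\geq 3$ forces $d\geq 13>3$, we get $3\nmid d$, hence $b\not\equiv 1\pmod 3$, so $b\equiv 2\pmod 3$ or $b=3$. Moreover $a^2+a+1=3de\geq 3d=3(b^2+b+1)>b^2+b+1$ gives $a>b$, and applying Lemma \ref{Lemma:SimplifyingLemma} to the factorization $a^2+a+1=d\cdot e\cdot 3$ (valid since $d\geq e$) gives $d>a/\sqrt{3}$, so in particular $2d>a$ and $2a<2\sqrt{3}\,d<4d$.

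The core step is the divisibility $d\mid (a^2+a+1)-(b^2+b+1)=(a-b)(a+b+1)$, which holds because $d\mid a^2+a+1$ by (1) and $d=b^2+b+1$ by (2). As $d$ is prime, either $d\mid a-b$ or $d\mid a+b+1$. In the first case, $0<a-b<a<2d$ forces $a-b=d=b^2+b+1$, whence $a=(b+1)^2$, which is composite since $b\geq 3$, contradicting that $a$ is prime. In the second case, $0<a+b+1\leq 2a<4d$ forces $a+b+1=kd$ with $k\in\{1,2,3\}$. If $k=1$ then $a+b+1=b^2+b+1$, so $a=b^2$ is composite, a contradiction. To eliminate $k=2$ and $k=3$ I would reduce modulo $3$: $d\equiv 1\pmod 3$ (it is $b^2+b+1$ with $b\not\equiv 1$), while $a+b+1\equiv 1\pmod 3$ when $b\equiv 2\pmod 3$ and $a+b+1\equiv 2\pmod 3$ when $b=3$; comparing with $kd\equiv k\pmod 3$ leaves only $k=1$ in the first subcase (already handled) and only $k=2$ in the second, which then gives $a=2d-b-1=22$, not prime. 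This exhausts every case.

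The step I expect to be the real obstacle, and the reason I avoid simply invoking Lemma \ref{Lemma:Factorization2} with $f=1$, is the degenerate possibility $d=e$: that lemma needs the strict chain $d>e>f$, and when $d=e$ the constraint becomes $a^2+a+1=3d^2$, i.e.\ $(2a+1)^2-12d^2=-3$, a Pell-type family whose prime members are awkward to rule out head-on. The divisibility argument above bypasses this, since it uses only $d\geq e$ (through $d>a/\sqrt{3}$) and never the inequality $d\neq e$. Beyond that, the only genuinely delicate bookkeeping is the modular elimination of $k=2,3$, where one must be careful to treat $b=3$ separately from $b\equiv 2\pmod 3$.
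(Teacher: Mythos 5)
Your proof is correct, and at its core it runs on the same engine as the paper's: the divisibility $d\mid(a^2+a+1)-(b^2+b+1)=(a-b)(a+b+1)$, the size bound $d>a/\sqrt{3}$ from Lemma \ref{Lemma:SimplifyingLemma}, forcing $d=a+b+1$ in the main branch, and the terminal contradiction $a=b^2$. The difference is that the paper disposes of the lemma in one line by citing Lemma \ref{Lemma:Factorization2} with $f=1$, whereas you inline that argument. Your stated reason for doing so is a legitimate observation: Lemma \ref{Lemma:Factorization2} is hypothesized with the strict chain $d>e>f$, so the citation does not literally cover the boundary case $d=e$ permitted by condition (3). (The gap is cosmetic rather than substantive --- the proof of Lemma \ref{Lemma:Factorization2} only uses $d\geq e$ via Lemma \ref{Lemma:SimplifyingLemma}, so the cited lemma is true with $d\geq e>f$; and your worry that $d=e$ leads to an intractable Pell family is unfounded for the same reason --- but your self-contained treatment cleanly sidesteps the issue.) Your extra bookkeeping (the subcases $d\mid a-b$, $k=2,3$, and $b=3$ versus $b\equiv 2\pmod 3$, including the explicit computation $a=22$ when $b=3$, $k=2$) is all verified correct; it is exactly the casework that the appeal to Lemma \ref{Lemma:Factorization2} packages away.
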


\begin{proof}
Suppose to the contrary that such primes exist.
By Lemma \ref{Lemma:Factorization2}, with $f=1$, we have that $d=a+b+1$.
Thus,
\[
b^2+b+1= a+b+1
\]
which implies $a=b^2$, a contradiction.
\end{proof}

We next show that shows that the largest contributed prime of an element from $S_{3,1}$ is not also the largest contributed prime of an element from $S_{2,1}$.

\begin{lemma}\label{Lemma:Unique between S21 and S31}
There do not exist odd primes $a$, $b$, $d$, and $e$ that satisfy
\begin{enumerate}
    % \item[\textup{(1)}] All of them are $1\!\!\mod 3$
    \item[\textup{(1)}] $a^2+a+1=3de$,
    \item[\textup{(2)}] $b^2+b+1=3d$, and
    \item[\textup{(3)}] $d\geq e$.
\end{enumerate}

\end{lemma}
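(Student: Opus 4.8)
The plan is to imitate the proof of Lemma~\ref{Lemma:Unique between S1 and S31} almost verbatim, replacing the appeal to Lemma~\ref{Lemma:Factorization2} by one to Lemma~\ref{Lemma:Factorization3}, since here the second equation carries an extra factor of $3$ in front of $d$. First I would argue by contradiction: suppose odd primes $a,b,d,e$ satisfy (1)--(3). Because $e$ is an odd prime we have $e\geq 3>1$, so rewriting hypothesis (2) as $b^2+b+1=3d\cdot 1$ puts us exactly in the hypotheses of Lemma~\ref{Lemma:Factorization3} with $f=1$: the chain $d\geq e>1=f$ holds, and $a,b,d$ are prime as required.

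Next I would apply Lemma~\ref{Lemma:Factorization3} to conclude $d=(a+b+1)/3$, i.e.\ $3d=a+b+1$. Substituting this into hypothesis (2) gives $b^2+b+1=a+b+1$, hence $a=b^2$. Since $b$ is an odd prime, $b^2$ is composite, contradicting that $a$ is prime, and the lemma follows.

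I do not expect a real obstacle here. The only step that needs a moment's attention is confirming that all the hypotheses of Lemma~\ref{Lemma:Factorization3} are met, in particular the strict inequality $e>f$; this is immediate once one observes that $e$, being prime, is at least $3$, so $e>1=f$. Everything after that is a single substitution. (If one preferred a self-contained proof, one could instead note that $3\mid a^2+a+1$ and $3\mid b^2+b+1$ force $a\equiv b\equiv 1\pmod 3$, and then rerun inline the size-and-divisibility argument used in Lemma~\ref{Lemma:Factorization3}; but reusing that lemma is cleaner, and it is already available.)
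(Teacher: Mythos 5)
Your proposal is correct and follows essentially the same route as the paper: both apply Lemma~\ref{Lemma:Factorization3} with $f=1$ (the hypotheses $d\geq e>1=f$ being verified exactly as you note) to get $3d=a+b+1$, and both conclude $a=b^2$, contradicting primality of $a$. Your final step—substituting $3d=a+b+1$ directly into $b^2+b+1=3d$—is in fact a slightly cleaner way to reach $a=b^2$ than the paper's expansion of $3de=(a+b+1)(a-b+1)$.
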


\begin{proof}
Suppose to the contrary that such primes exist.
By Lemma \ref{Lemma:Factorization3}, with $f=1$, we have that $d=a+b+1$ and $d=(a-b+1)/3$.
Then,
\begin{align*}
    a^2+a+1 &= 3de \\
    &=3(a+b+1)\left(\frac{a-b+1}{3} \right) \\
    &=(a+b+1)(a-b+1) \\
    &=a^2+a+1+(a-b^2).
\end{align*}
Thus, $a=b^2$, a contradiction.
\end{proof}

We next show that in the exceptional case that an element of $S_{2,2}$ shares its largest contributed prime with an element of $S_{2,1}$, the smaller contributed prime of that element of $S_{2,2}$ is not the largest contributed prime of an element from $S_{3,1}$

\begin{lemma}\label{Lemma:Small factor of S21-S22 pair not S31}
Let $a$, $b$, $d$, and $f$ be odd primes such that $d>f>3$, satisfying
\[
    a^2+a+1=df \quad \text{and}
\]
\[
    b^2+b+1=3d.
\]
Then, there do not exist odd primes $c$ and $g$ such that
\[
c^2+c+1=3fg,
\]
with $f>g$.
\end{lemma}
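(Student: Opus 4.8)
The plan is to argue by contradiction: assume such primes $c$ and $g$ exist, reduce the hypotheses to explicit formulas for $a$, $d$, and $f$ in terms of $b$, and then squeeze $c$ between a lower bound (forced by a divisibility-and-parity argument) and an upper bound (forced by the crude estimate $c^2+c+1 = 3fg < 3f^2$) that cannot both hold.

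For the setup, I would apply Lemma~\ref{Lemma:Factorization1} to the equations $a^2+a+1 = d\cdot f$ and $b^2+b+1 = d\cdot 3$ with $d$ as the shared prime; its hypotheses $d>f>3$ are precisely what we are given. This gives $d = a+b+1$ and $a-b = f-3$. Substituting $d=a+b+1$ into $b^2+b+1=3d$ yields $a = (b^2-2b-2)/3$ and then $f = a-b+3 = (b^2-5b+7)/3$ (cf.\ \eqref{Eq:QuadraticEquation}). Two inequalities I would record for later are $a>f$ (immediate from $a-f = b-3 \ge 4$, since $b$ is an odd prime distinct from $3$) and $2f>a$ (from $2f-a = (b-4)^2/3 > 0$, using that $b$ is odd). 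I would also note the residues we need: since $d,f\neq 3$ we have $3\nmid a^2+a+1$, so $a\equiv 2\pmod 3$; and, in the presence of $c$, Lemma~\ref{Lemma:ModularityOfSigma} applied to $f\mid c^2+c+1$ gives $f\equiv 1\pmod 3$.

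Now assume odd primes $c,g$ with $c^2+c+1 = 3fg$ and $f>g$. Since $3\mid c^2+c+1$, a residue check gives $c\equiv 1\pmod 3$. Because $f$ divides both $a^2+a+1$ and $c^2+c+1$, it divides $(a-c)(a+c+1)$, so $f\mid a-c$ or $f\mid a+c+1$. If $f\mid a-c$, write $a-c = mf$; then $m=0$ forces $d=3g$, impossible since $d$ is prime; $m\ge 2$ forces $c = a-mf \le a-2f < 0$; $m=1$ forces $c = a-f = b-3$, which is even, contradicting that $c$ is an odd prime; and $m\le -1$ forces $c = a+|m|f > 2f$, whence $c^2+c+1 > c^2 > 4f^2 > 3f^2 > 3fg$, a contradiction. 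If instead $f\mid a+c+1$, write $a+c+1 = kf$ with $k\ge 1$: the parities of $a$, $c$, $f$ together with $a\equiv 2$, $c\equiv 1$, and $f\equiv 1$ modulo $3$ force $k$ odd and $k\equiv 1\pmod 3$, i.e.\ $k\equiv 1\pmod 6$; since $f < a < a+c+1$ we cannot have $k=1$, so $k\ge 7$. Then $c \ge 7f - a - 1 = 2b^2 - 11b + 16$, while $c^2+c+1 = 3fg < 3f^2 = (b^2-5b+7)^2/3$ gives $c < (b^2-5b+7)/\sqrt3$. These contradict each other once one checks $\sqrt3\,(2b^2-11b+16) > b^2-5b+7$ for every $b$, which holds because $\sqrt3 > 5/3$ and $7b^2-40b+59$ has negative discriminant.

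The step I expect to be the main obstacle is the branch $f\mid a+c+1$. In the analogous earlier lemma (Lemma~\ref{Lemma:Unique between S1 and S2}) the corresponding quantity carried no extra factor of $3$, so one could conclude it was smaller than $a$ and bound the multiplier by $3$; here that shortcut is unavailable, and instead one must extract the sharper congruence $k\equiv 1\pmod 6$ — which jumps $k$ straight to $7$ — and play it off against the very crude bound $c^2+c+1 < 3f^2$. Checking the resulting quadratic-in-$b$ inequality is the one genuinely computational point, though it is far from tight.
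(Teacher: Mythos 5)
Your proof is correct and follows essentially the same route as the paper's: Lemma \ref{Lemma:Factorization1} gives $d=a+b+1$ and $f=a-b+3$, the bounds $a<2f$ and $f>c/\sqrt{3}$ are established, and the factorization $(a^2+a+1)-(c^2+c+1)=(a-c)(a+c+1)$ forces $f$ to divide $a-c$ or $a+c+1$, with each branch eliminated by parity and mod-$3$ constraints. The only cosmetic difference is in the branch $f\mid a+c+1$: the paper notes $a+c+1\le 2\max(a,c)<4f$ (since $c<\sqrt{3}\,f<2f$ as well as $a<2f$), so the multiplier is at most $3$ and only $k=1$ survives the congruence checks, whereas you take the slightly longer detour through $k\equiv 1\pmod 6$ and an explicit quadratic-in-$b$ estimate to exclude $k\ge 7$ --- so your "main obstacle" is actually avoidable by the same shortcut used earlier.
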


\begin{proof}
Suppose the contrary that such primes exist.
Then, by Lemma \ref{Lemma:Factorization1}, we have that $f=a-b+3$.
From the proof of Corollary \ref{Corollary:Uniqueness}, we can express $a$ in terms of $f$.
We see that since $f>3$ we have
\begin{align*}
    a &= \frac{1}{2} \left(-1 + \sqrt{4 f^2+4f \sqrt{12 f-3} +12 f-3}\right) \\
    & <2f.
\end{align*}
From Lemma \ref{Lemma:SimplifyingLemma}, we have $f\geq c/\sqrt{3}$.

We can now factor whichever is positive of $(a^2+a+1)-(c^2+c+1)$ and $(c^2+c+1)-(a^2+a+1)$, giving us two cases to consider.

\vspace{5mm}

\noindent \textbf{Case 1}: Suppose $a>c$.
Thus we have
\[
f(d-3g) = df-3fg =  (a^2+a+1)-(c^2+c+1) = (a-c)(a+c+1).
\]
So $f\mid (a-c)$ or $f\mid (a+c+1)$.
Suppose $f\mid (a-c)$.
Since we also have $f>a/2$ from above, we have that $f=a-c$.
Then, $a-c=a-b+3$ and so $b=c+3$, a contradiction.
Thus $f\mid (a+c+1)$ and since
\[
a+c+1 \leq 2a <4f,
\]
we have that $kf=a+c+1$ for some $k\in \set{1,2,3}$.
Then, $k=3$ yields $3f=a+c+1\equiv 1\pmod 3$, a contradiction.
Similarly, $k=2$ gives $2f=a+c+1\equiv 1\pmod 2$, a contradiction.
Therefore, $k=1$ and so $f=a+c+1$.
Hence, $a+c+1=a-b+3$ and so $b+c=2$, a contradiction.

\vspace{5mm}

\noindent \textbf{Case 2}:
Suppose $a<c$.
Thus we have
\[
f(3g-d) = 3fg-df =  (c^2+c+1)-(a^2+a+1) = (c-a)(a+c+1).
\]
So $f\mid (c-a)$ or $f\mid (a+c+1)$.
Suppose $f\mid (c-a)$.
From Lemma \ref{Lemma:SimplifyingLemma} we have that $f>c/\sqrt{3}$, so $f>c/2$ and thus we have that $1\equiv f=c-a \equiv 2\pmod 3$, a contradiction.
Thus $f\mid (a+c+1)$ and since
\[
a+c+1 \leq 2c <4f,
\]
we have that $kf=a+c+1$ for some $k\in \set{1,2,3}$.
Then, $k=3$ yields $3f=a+c+1\equiv 1\pmod 3$, a contradiction.
Similarly, $k=2$ gives $2f=a+c+1\equiv 1\pmod 2$, a contradiction.
Thus $k=1$ and so $f=a+c+1$.
Hence, $a+c+1=a-b+3$ and so $b+c=2$, a contradiction.
\end{proof}

\begin{lemma}\label{Lemma:LinkAlmostInjective}
Let the linking map $\ell : S_1\cup S_2\cup S_{3,1} \to Q$ be defined by the rule $p \mapsto \ell_p$.
Then, for every $q\in Q$, $\abs{\set{p\in S_1\cup S_{2,1}\; : \; \ell_p=q}} =1$ and $\abs{\set{p\in S_{2,2} \cup S_{3,1}\; : \; \ell_p=q}} \leq 2$.
\end{lemma}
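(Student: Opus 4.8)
The plan is to deduce both statements from the lemmas already established, the first being essentially immediate and the second requiring an exhaustive case analysis. For the first statement, observe that $S_1\cup S_{2,1}\subseteq S_1\cup S_2$ and that $\ell$ is injective on $S_1\cup S_2$ by Lemma~\ref{Lemma:LinkInjective} (whose definition of $\ell_p$ agrees with the present one on this subdomain); hence distinct elements of $S_1\cup S_{2,1}$ have distinct linked primes, so $\set{p\in S_1\cup S_{2,1}\;:\;\ell_p=q}$ has at most one element for every $q\in Q$.

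For the second statement I would argue by contradiction, supposing $p_1,p_2,p_3$ are three distinct elements of $S_{2,2}\cup S_{3,1}$ with $\ell_{p_1}=\ell_{p_2}=\ell_{p_3}=q$. The first step is to show that \emph{at most one} of the $p_i$ lies in $S_{2,2}$. Suppose $p,p'\in S_{2,2}$ are distinct with $\ell_p=\ell_{p'}=q$; each of $p,p'$ contributes exactly two distinct primes, both exceeding $3$, so $q$ is one of them in each case, and since $p\ne p'$ their other contributed primes $\sigma(p^2)/q$ and $\sigma((p')^2)/q$ are distinct. If $q$ is the larger contributed prime of both, this contradicts Lemma~\ref{Lemma:ZelProof2}. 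If $q$ is the smaller for both, then by the definition of the linking map there are elements of $S_{2,1}$ contributing the larger primes $\sigma(p^2)/q$ and $\sigma((p')^2)/q$, and Corollary~\ref{Corollary:Uniqueness} with $d=q$ forces $p=p'$. If $q$ is the larger for one, say $p$ (so $\sigma(p^2)=q r$ with $q>r$ an odd prime), and the smaller for $p'$, then applying Lemma~\ref{Lemma:Unique between S1 and S2} to $p'$ together with the element of $S_{2,1}$ contributing $\sigma((p')^2)/q$, the prime $p$ violates the conclusion of that lemma with $h=r$. In every case we reach a contradiction, so at most one $p_i$ lies in $S_{2,2}$; relabelling, $p_2,p_3\in S_{3,1}$, and we write $\sigma(p_i^2)=3qg_i$ with $q\ge g_i$ for $i=2,3$, where $g_2\ne g_3$ since otherwise $p_2=p_3$.

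It then remains to split on the position of $p_1$. If $p_1\in S_{3,1}$, then all three $p_i$ lie in $S_{3,1}$, with $\sigma(p_i^2)=3qg_i$, $q\ge g_i$, and $g_1,g_2,g_3$ pairwise distinct; reordering so that $g_1>g_2>g_3$, Lemma~\ref{Lemma:Semi-linked in S31} with $d=q$ gives a contradiction. If $p_1\in S_{2,2}$, I distinguish whether $q=\ell_{p_1}$ is the larger or the smaller prime contributed by $p_1$. If it is the larger, write $\sigma(p_1^2)=q r$ with $q>r$; then the equations $\sigma(p_2^2)=3qg_2$, $\sigma(p_3^2)=3qg_3$, $\sigma(p_1^2)=q r$ meet the hypotheses of Lemma~\ref{Lemma:Semi-linked between S22 and S31} with $d=q$ (the two ``$3de$'' equations from $p_2,p_3$, the ``$dg$'' equation from $p_1$), since $q\ge g_2,g_3,r$ and $g_2\ne g_3$, a contradiction. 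If it is the smaller, then some $p_1'\in S_{2,1}$ contributes the larger prime $r\coloneqq\sigma(p_1^2)/q$, so $\sigma(p_1^2)=r q$ with $r>q>3$ and $\sigma((p_1')^2)=3r$; since $g_2\ne g_3$, at least one of $p_2,p_3$, say $p_2$, has $g_2<q$, and then $\sigma(p_1^2)=rq$, $\sigma((p_1')^2)=3r$, $\sigma(p_2^2)=3qg_2$ violate Lemma~\ref{Lemma:Small factor of S21-S22 pair not S31} (applied with its ``$d$'' equal to $r$ and its ``$f$'' equal to $q$). This exhausts all cases.

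I expect the main obstacle to be the first step, showing that no two elements of $S_{2,2}$ have the same linked prime: it requires keeping careful track of whether the linked prime of an $S_{2,2}$-element is the larger or smaller of its two contributed primes, and in the ``smaller'' cases one must first extract the associated element of $S_{2,1}$ so that Corollary~\ref{Corollary:Uniqueness} and Lemmas~\ref{Lemma:Unique between S1 and S2} and~\ref{Lemma:Small factor of S21-S22 pair not S31} apply in exactly the forms stated. The remaining cases are then direct substitutions into Lemmas~\ref{Lemma:Semi-linked in S31} and~\ref{Lemma:Semi-linked between S22 and S31}, once one records the immediate fact that two distinct elements of $S_{3,1}$ sharing a linked prime necessarily have distinct secondary primes.
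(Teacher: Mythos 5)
Your proof is correct and takes essentially the same route as the paper: an exhaustive case analysis reducing each configuration to Lemmas \ref{Lemma:ZelProof2}, \ref{Lemma:Unique between S1 and S2}, \ref{Lemma:Semi-linked in S31}, \ref{Lemma:Semi-linked between S22 and S31}, \ref{Lemma:Small factor of S21-S22 pair not S31} and Corollary \ref{Corollary:Uniqueness}, the only organizational difference being that you re-derive injectivity on $S_{2,2}$ instead of citing Lemma \ref{Lemma:LinkInjective}. Your handling of the final case is in fact slightly more careful than the paper's (observing that $g_2\neq g_3$ forces some $g_i<q$ so that the hypothesis $f>g$ of Lemma \ref{Lemma:Small factor of S21-S22 pair not S31} is met).
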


\begin{proof}

Suppose that we have $p_1,p_2,p_3 \in S_1 \cup S_2\cup S_{3,1}$, and suppose that $\ell_{p_1}=\ell_{p_2}=\ell_{p_3}$.
All nine cases in Lemma \ref{Lemma:LinkInjective} still apply, so we have five additional cases to consider.

\vspace{5mm}

\noindent\textbf{Case 10}: Suppose that $p_1,p_2,p_3\in S_{3,1}$.
Then, by Lemma \ref{Lemma:Semi-linked in S31}, we have, without loss of generality, that $p_1=p_3$.
Thus the preimage of $\ell_{p_1}$ has at most two elements, $p_1$ and $p_2$.

\vspace{5mm}

\noindent\textbf{Case 11}: Suppose that $p_1 \in S_1$ and $p_2 \in S_{3,1}$.
This case is ruled out by Lemma \ref{Lemma:Unique between S1 and S31}.

\vspace{5mm}

\noindent\textbf{Case 12}: Suppose that $p_1 \in S_{2,1}$ and $p_2 \in S_{3,1}$.
This case is ruled out by Lemma \ref{Lemma:Unique between S21 and S31}.

\vspace{5mm}

\noindent\textbf{Case 13}: Suppose that $p_1\in S_{2,2}$, $p_2\in S_{3,1}$, and $\ell_{p_1}$ is the largest contributed prime of $p_1$.
By the injectivity of the linking map over $S_1 \cup S_2$, we know that $p_3 \not \in S_1 \cup S_2$.
Hence, the only remaining possibility is that $p_3 \in S_{3,1}$, however this case is ruled out by Lemma \ref{Lemma:Semi-linked between S22 and S31}.

\vspace{5mm}

\noindent\textbf{Case 14}: Suppose that $p_1\in S_{2,2}$, $p_2\in S_{3,1}$, and $\ell_{p_2}$ is the smaller contributed prime of $p_1$.
This case is ruled out by Lemma \ref{Lemma:Small factor of S21-S22 pair not S31}.
\end{proof}

\section{Proof of Theorem \ref{Theorem:OurResult} and Theorem \ref{Theorem:OurNot3Result}}

To obtain the bound in \cite{Zel2}, twenty-one inequalities were derived.
We use thirteen of those inequalities, modify one to eliminate unnecessary variables, and improve one other (Equation 35 in \cite{Zel2}).
Further, we introduce four new inequalities.
The correspondence between this paper and \cite{Zel2} is shown in Table \ref{Table:EquationCorr} below.
We now give brief explanations of the inequalities.

\begin{table}[htbp]

    \begin{tabular}{|c|c|}

    \hline

       Our Equation & Corr.\ eqn.\ in \cite{Zel2}  \\

    \hline
        \eqref{Eq:SpecialExponent} & (26)  \\

        \eqref{Eq:SSplit} & (27)   \\

        \eqref{Eq:S2Split} & (28)   \\

        \eqref{Eq:S3Split} & (29) \\

        \eqref{Eq:SHSplit} & (30) \\

        \eqref{Eq:TotalFactors} & (31) \\

        \eqref{Eq:S31SplitMore} & (32)  \\

        \eqref{Eq:S31STSplitMore} & (33)  \\

        \eqref{Eq:S1SplitMore} & (34)   \\

        \eqref{Eq:BetterZel} & (35) \\

        \eqref{Eq:Contributed3's} & (36)  \\

        \eqref{Eq:ContributedByS} & (37) \\

        \eqref{Eq:Size of T} & (38)   \\

        % \eqref{Eq:S22SplitMore} & (39) \\

        % \eqref{Eq:S21SplitMore} & (40)  \\

        \eqref{Eq:SpecialContributions} & (41)  \\

        % \eqref{Eq:S22STSplitMore} & (43) \\

        %  \eqref{Eq:ContributedToS} & (44)  \\

        %  \eqref{Eq:ContributedToT} & (45)  \\

        \eqref{Eq:DistinctFactors} & (47) \\
        \hline
    \end{tabular}
    \caption{}
    \label{Table:EquationCorr}

\end{table} % Place it correctly.

% \vspace{20mm}

Inequalities \eqref{Eq:TotalFactors} through \eqref{Eq:SpecialExponent} are all straightforward consequences of their associated definitions.

% Straightforward eqs.
\begin{align}
    \label{Eq:TotalFactors}e_0+f_3+2\abs{S}+g_4&=\Omega \\
    \label{Eq:DistinctFactors} \omega &\leq 2+\abs{S}+\abs{T} \\
    \label{Eq:Size of T}4\abs{T} &\leq g_4 \\
    \label{Eq:SpecialExponent}1&\leq e_0
\end{align}

% Set Splitting eqs.
\vspace{20mm}

Equations \eqref{Eq:SSplit} through \textbf{\eqref{Eq:S31SplitMore}} are obtained by decomposing $S$, and some of its subsets, into disjoint subsets.
Note there are similar decompositions for $S_{2,1}$ and $S_{2,2}$, but they do not contribute to our result.

\begin{align}
    \label{Eq:SSplit} \abs{S} &= \abs{S_1}+\abs{S_2}+\abs{S_3}+\abs{S_{\geq 4 }} \\
    \label{Eq:S2Split} \abs{S_2}&= \abs{S_{2,1}}+\abs{S_{2,2}} \\
    \label{Eq:S3Split} \abs{S_3}&= \abs{S_{3,1}}+\abs{S_{3,2}} \\
    \label{Eq:SHSplit} \abs{S_{\geq 4 }}&= \abs{S_{\geq 4 ,1}}+\abs{S_{\geq 4 ,2}} \\
    \label{Eq:S1SplitMore}\abs{S_1} &= \abs{S_1^S}+\abs{S_1^T}+\abs{S_1^{\set{p_0}}} \\
    % \label{Eq:S21SplitMore} \abs{S_{2,1}}  &= \abs{S_{2,1}^S}+\abs{S_{2,1}^T}+\abs{S_{2,1}^{\set{p_0}}} \\
    % \label{Eq:S22SplitMore} |S_{2,2}|  &= \abs{S_{2,2}^{S,S}}+\abs{S_{2,2}^{T,T}}+\abs{S_{2,2}^{S,T}}+\abs{S_{2,2}^{\set{p_0}}} \\
    % \label{Eq:S22STSplitMore} \abs{S_{2,2}^{S,T}}  &= \abs{S_{2,2}^{S\setminus f(S_1),T}} + \abs{S_{2,2}^{S,T\setminus f(S_1)}} \\
    \label{Eq:S31SplitMore}\abs{S_{3,1}}  &= \abs{S_{3,1}^{S,S}}+\abs{S_{3,1}^{T\cup\set{p_0},T\cup\set{p_0}}}+\abs{S_{3,1}^{S,T\cup \set{p_0}}} % Split this up more. No unions.
\end{align}

Inequalities \eqref{Eq:S31STSplitMore} and \eqref{Eq:S32SplitMore} are obtained by splitting sets into subsets, which may overlap.
In particular, it was proven in \cite{H&N} that for $p\in S_{3,2}$, with $f(p)=\set{p_1,p_2,p_3}$, that, without loss of generality, $p_1\notin f(S_1)$.
We make a distinction for when $p_1\in S$ and when $p_1\in T\cup \set{p_0}$, giving \eqref{Eq:S32SplitMore}.

\begin{align}
    \label{Eq:S31STSplitMore}\abs{S_{3,1}}  &\leq  \abs{S_{3,1}^{S\setminus f(S_1),T\cup \set{p_0}}}+\abs{S_{3,1}^{S,(T\cup \set{p_0})\setminus f(S_1)}} \\
    \label{Eq:S32SplitMore} \abs{S_{3,2}} &\leq
    \abs{S_{3,2}^{S\setminus f(S_1)}}+
    \abs{S_{3,2}^{(T\cup \set{p_0})\setminus f(S_1)}}
\end{align}

% Contributed counting

We obtain inequality \eqref{Eq:SpecialContributions} by noting that only one element of $S_1$ can contribute the special prime, i.e., $S_1^{\set{p_0}}$ has at most one element.

\begin{align}
    \label{Eq:SpecialContributions}\abs{S_1^{\set{p_0}}} &\leq 1 
\end{align}

Inequality \eqref{Eq:Contributed3's} holds since every element of $S_{2,1}\cup S_{3,1}\cup S_{\geq 4,1}$ contributes exactly one $3$, and thus $3$ must divide $N$ at least once for each element of these sets.

\begin{align}\label{Eq:Contributed3's}
\abs{S_{2,1}}+\abs{S_{3,1}}+\abs{S_{\geq 4 ,1}} \leq f_3
\end{align}

Counting the number of primes that are not $3$ contributed by elements of $S$ gives
\[
\abs{S_1}+2\abs{S_{2,2}}+3\abs{S_{3,2}}+4\abs{S_{\geq 4 ,2}}+\abs{S_{2,1}}+2\abs{S_{3,1}}+3\abs{S_{\geq 4 ,1}}  \leq g_4+e_0+2\abs{S_{2,1}}+2\abs{S_{3,1}}+2\abs{S_{\geq 4 ,1}},
\]
which simplifies to
\begin{align}\label{Eq:ContributedByS}
\abs{S_1}+2\abs{S_{2,2}}+3\abs{S_{3,2}}+4\abs{S_{\geq 4 ,2}}+\abs{S_{\geq 4 ,1}} \leq g_4+e_0+\abs{S_{2,1}}.
\end{align}

% New and better inequalities

Inequality \eqref{Eq:BetterZel} comes from Lemma \ref{Lemma:LinkInjective} and inequality \eqref{Eq:NewIneq} comes from Lemma \ref{Lemma:LinkAlmostInjective}.

\begin{align}
\label{Eq:BetterZel}
    \abs{S_1}+\abs{S_2} &\leq \abs{T}+\abs{S_{2,1}}+\abs{S_{3,1}}+\abs{S_{\geq 4 ,1}}+1 \\
\label{Eq:NewIneq}
    \abs{S_1}+\abs{S_{2,1}}+\frac{1}{2}(\abs{S_{2,2}}+\abs{S_{3,1}}) &\leq \abs{T}+\abs{S_{2,1}}+\abs{S_{3,1}}+\abs{S_{\geq 4 ,1}}+1
\end{align}

Inequality \eqref{Eq:ContributedToSRedux} comes from establishing a lower bound for the total number of primes (counting multiplicity) which divide $N$ exactly twice and are $1$ modulo $3$.
The right hand side of the inequality is exactly the total number of such primes (note that no elements of $S_1$ are $1$ modulo $3$).

To justify the left hand side, note that every element of $S_1^S$ will contribute one unique prime to $S$ which is $1$ modulo $3$.
Since these primes are in $S$, by definition they must divide $N$ exactly twice.
We will show that all other sets used in the left-hand side of this inequality contribute at least one prime that is distinct from all of these primes contributed by $S_1^S$.
Therefore, we are allowed to double the $\abs{S_1^S}$ term in the inequality.

By Lemma \ref{Lemma:Unique between S1 and S31}, the largest prime contributed by an element of $S_{3,1}$ is not also contributed by an element of $S_1$.
Therefore, each prime in $S_{3,1}^{S,S}$ will contribute at least one prime which is in $S$ and is $1$ modulo $3$ and is distinct from any primes contributed by elements of $S_1$.

\begin{align}
    \label{Eq:ContributedToSRedux} 2\abs{S_1^S}+\abs{S_{3,1}^{S,S}}+
    \abs{S_{3,1}^{S\setminus f(S_1),T\cup \set{p_0}}}+\abs{S_{3,2}^{S\setminus f(S_1)}} &\leq 2\abs{S_{2,1}}+2\abs{S_{3,1}}+2\abs{S_{\geq 4 ,1}} 
\end{align}

Similarly, we have at least one such prime contributed by each element of $S_{3,1}^{S\setminus f(S_1),T\cup \set{p_0}}$ and by each element of $S_{3,2}^{S\setminus f(S_1)}$, simply by the definition of these sets.

Inequality \eqref{Eq:ContributedToTRedux} is very similar to \eqref{Eq:ContributedToSRedux}. It comes from establishing a lower bound for the total number of primes (counting multiplicity) which divide $N$ at least four times, or are equal to the special prime.
The argument justifying \eqref{Eq:ContributedToTRedux} is identical to the above in almost every way, with $T\cup\set{p_0}$ and $S$ interchanged, though we do use $S_1^T$ rather than $S_1^{T\cup\set{p_0}}$, simply to ensure that each element of the set contributes a prime which divides $N$ at least four times.

There is a notable difference between this inequality and the last one, however, in that $g_4+e_0$ is the total number of all primes which divide $N$ at least four times, or are the special prime, not just those that are $1$ modulo $3$.
We have not utilized the same subdivisions of the set $T$ as we have for $S$, but this does not affect the argument.

\begin{align}
    \label{Eq:ContributedToTRedux} 4\abs{S_1^T}+\abs{S_{3,1}^{T\cup\set{p_0},T\cup\set{p_0}}}+\abs{S_{3,1}^{S,(T\cup \set{p_0})\setminus f(S_1)}}+\abs{S_{3,2}^{(T\cup \set{p_0})\setminus f(S_1)}} &\leq g_4+e_0
\end{align}

% Linear combination of these inequalities that maximize the coefficient of $\omega$, while keeping

Our goal is to get an inequality of the form $a\omega+b\leq \Omega$, with $a,b\in \Q$, with $a$ as large as possible.
Once we have found the maximum value of $a$, we would like to then maximize $b$.
To that end, we rewrite all inequalities (and equalities) of this section so that the right hand side is zero. 
We represent the left hand side of each rewritten inequality (5.i) by $x_i$, for $1\leq i \leq 19$.
We multiply each $x_i$ by a coefficient $c_i$ to get a linear combination
\begin{equation} \label{Eq:LinearCombination}
\sum_{i=1}^{19} c_ix_i \leq 0.
\end{equation} 
Now we set $c_1=1$, so that the coefficient of $\Omega$ is fixed. Next, for each $i$ such that (5.i) is an inequality rather than an equality, we add the constraint $c_i\geq 0$.
Finally, after expanding and collecting like terms in \eqref{Eq:LinearCombination}, we also constrain the coefficients of each term to be nonnegative.
For example, to ensure that the coefficient of the term $|S|$ is nonnegative in \eqref{Eq:LinearCombination}, we must add the constraint that $2 c_1- c_2+c_5\geq 0$.
We then maximize $c_2$ subject to these constraints in Mathematica, which gives the values in Table \ref{Table:CoefficientTable}.

\begin{table}[htbp]
    \centering
    \begin{tabular}{|l|l|l|l|l|}
        \hline
        $c_1=1$ &
        $c_{2}=99/37$ &
        $c_{3}=28/37$ &
        $c_{4}=28/37$ &
        $c_{5}=25/37$
        \\
        \hline
        $c_{6}=20/37$ &
        $c_{7}=25/37$ &
        $c_{8}=25/37$ &
        $c_{9}=4/37$ &
        $c_{10}=1/37$
        \\
        \hline
        $c_{11}=1/37$ &
        $c_{12}=1/37$ &
        $c_{13}=4/37$ &
        $c_{14}=1$ &
        $c_{15}=8/37$
        \\
        \hline
        $c_{16}=5/37$ &
        $c_{17}=8/37$ &
        $c_{18}=2/37$ & 
        $c_{19}=1/37$ &
        \\
         \hline
    \end{tabular}
    \caption{}
    \label{Table:CoefficientTable}
\end{table}

Expanding \eqref{Eq:LinearCombination} with these coefficients gives

\begin{align*}
    \frac{1}{37}\left(\abs{S_{3,1}^{S,S}}+\abs{S_{3,1}^{S\setminus f(S_1),T\cup \set{p_0}}}
    +\abs{S_{3,2}^{S\setminus f(S_1)}}+3\abs{S_{\geq 4,1}}+7\abs{S_{\geq 4,2}}+99\omega-187\right)-\Omega & \leq 0.
\end{align*}

Then, since each quantity is nonnegative, we can simplify to
\[
\OurLinearTerm\omega - \OurConstantTerm \leq \Omega,
\]
as desired.

To deal with the case when $3\nmid N$, we need to add the equality
\begin{equation}\label{Eq:No3s}
    f_3=0
\end{equation}
and change \eqref{Eq:DistinctFactors} to
\[
\omega = 1+\abs{S}+\abs{T}.
\]
Using the modified system of equations, we get the coefficients displayed in Table \ref{Table:No3CoefficientTable} using Mathematica.

\begin{table}[htbp]
    \centering
    \begin{tabular}{|l|l|l|l|l|}
        \hline
        $c_{1}=1$ &
        $c_{2}=51/19$ &
        $c_{3}=14/19$ &
        $c_{4}=10/19$ &
        $c_{5}=13/19$
        \\
        \hline
        $c_{6}=8/19$ &
        $c_{7}=13/19$ &
        $c_{8}=13/19$ &
        $c_{9}=4/19$ &
        $c_{10}=1/19$
        \\
        \hline
        $c_{11}=1/19$ &
        $c_{12}=1/19$ &
        $c_{13}=4/19$ &
        $c_{14}=21/19$ &
        $c_{15}=4/19$
        \\
        \hline
        $c_{16}=5/19$ &
        $c_{17}=0$ &
        $c_{18}=2/19$ &
        $c_{19}=1/19$ &
        $c_{21}=2/19$
        \\
         \hline
    \end{tabular}
    \caption{}
    \label{Table:No3CoefficientTable}
\end{table}

These coefficients lead to the linequality
\[
\frac{51}{19}\omega - \frac{46}{19} \leq \Omega,
\]
as claimed.

\section{Future work}

The improvements in this paper are based on the idea of linked primes. Further study into this concept could yield additional improvements.
In particular, showing that the linking map extends to a larger domain would increase the numerics of this paper, with a natural boundary of the linking map being $S_{4,2}$, due to the work in \cite{H&N}.

We were able to prove that a largest contributed prime can be shared among elements of $S_1\cup S_2 \cup S_{3,1}$ only twice, and ideally we would want to extend this restriction to larger subsets of $S$. 
However, this is not possible for $S_{3,2}$, as there exists a set of five primes that could be in $S_{3,2}$ that share their largest contributed prime (these are 120587, 269561, 324143, 473117, and 833033, which share 16963). 
We suspect that elements of $S_{3,2}$ could share their largest prime arbitrarily often.
Therefore, extending the linking map to $S_{3,2}$ would likely require a change in the definition of a linked prime.

Our work has mainly focused on improving the linear term in Theorem \ref{Theorem:OurResult}, so more work could be done to improve the constant term. In discussion with the authors, Ochem and Rao suggested a method to improve the constant term with casework. They were able to get an improved constant term of $-75/37$ under the assumption $e_0\geq 5$, and the term $-124/37$ when $e_0=1$ and $p_0\equiv 2\pmod 3$. However, in the final case, $e_0=1$ and $p_0\equiv 1\pmod 3$, it is not obvious how to proceed.

In proving his bound for when $3\nmid N$, Zelinsky uses the factorization
\[
\Phi_3(\Phi_5(x))=(x^2-x+1)(x^6+3x^5+5x^4+6x^3+7x^2+6x+3).
\]
It is not difficult to show that this factorization is part of a family of factorizations of compositions of cyclotomic polynomials or the product of cyclotomic polynomials.
Namely:
\begin{proposition}\label{Prop:Neat}
Let $\Psi_n(x)=\sum_{i=0}^{n-1}x^i$, let $r$ be an odd integer, and let $t$ be an odd prime. If $r\equiv -1\pmod{2t}$ then
\[
\Phi_{2t}(x)\,\mid \,\Phi_t(\Psi_r(x)).
\]
\end{proposition}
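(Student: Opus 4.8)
The plan is to prove the divisibility by evaluating at roots of unity. Since $\Phi_{2t}(x)$ is monic, has integer coefficients, and is irreducible over $\Q$, Gauss's lemma reduces the claim $\Phi_{2t}(x)\mid\Phi_t(\Psi_r(x))$ in $\Z[x]$ to the same divisibility in $\Q[x]$; and because $\Phi_{2t}$ is irreducible with distinct roots (the primitive $2t$-th roots of unity), this holds if and only if some primitive $2t$-th root of unity $\zeta$ satisfies $\Phi_t(\Psi_r(\zeta))=0$. So it suffices to fix such a $\zeta$ and show that $\Psi_r(\zeta)$ is a root of $\Phi_t$, i.e.\ a primitive $t$-th root of unity.

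The first step is to simplify $\Psi_r(\zeta)$. Since $\Psi_r(x)=\sum_{i=0}^{r-1}x^i=(x^r-1)/(x-1)$ and $\zeta\neq 1$, we have $\Psi_r(\zeta)=(\zeta^r-1)/(\zeta-1)$. The hypothesis $r\equiv-1\pmod{2t}$ together with $\zeta^{2t}=1$ gives $\zeta^r=\zeta^{-1}$, so
\[
\Psi_r(\zeta)=\frac{\zeta^{-1}-1}{\zeta-1}=\frac{1-\zeta}{\zeta(\zeta-1)}=-\zeta^{-1}.
\]

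It remains to verify that $-\zeta^{-1}$ is a primitive $t$-th root of unity. Here I would use that $t$ is odd and $\operatorname{ord}(\zeta)=2t$, so $\zeta^t$ has order $2$ and hence $\zeta^t=-1$. Then $(-\zeta^{-1})^t=(-1)^t(\zeta^t)^{-1}=(-1)(-1)=1$, so $-\zeta^{-1}$ is a $t$-th root of unity; moreover it is not equal to $1$, since $-\zeta^{-1}=1$ would force $\zeta=-1$, contradicting $\operatorname{ord}(\zeta)=2t\geq 6$. As $t$ is prime, every $t$-th root of unity other than $1$ is primitive, so $-\zeta^{-1}$ is a root of $\Phi_t$, giving $\Phi_t(\Psi_r(\zeta))=0$, which completes the argument.

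There is no serious obstacle here: the result is essentially a one-line computation once one passes to roots of unity. The only subtle points are invoking Gauss's lemma to descend from a divisibility over $\Q$ to one over $\Z$, and keeping track of orders carefully enough to see that both hypotheses are genuinely needed — $r\equiv-1\pmod{2t}$ to replace $\zeta^r$ by $\zeta^{-1}$, and $t$ odd and prime to ensure $-\zeta^{-1}$ is a \emph{primitive} $t$-th root of unity rather than merely a $t$-th root of unity.
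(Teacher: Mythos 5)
Your proof is correct and follows essentially the same route as the paper: evaluate at a primitive $2t$-th root of unity $\zeta$ and check that $\Psi_r(\zeta)$ is a primitive $t$-th root of unity, hence a root of $\Phi_t$. The only cosmetic difference is that you write the value as $-\zeta^{-1}$ via the closed form $(\zeta^r-1)/(\zeta-1)$ and use primality of $t$ to get primitivity, whereas the paper writes it as $\zeta^{t-1}$ (the same number, since $\zeta^t=-1$); your added remarks on irreducibility and Gauss's lemma only make the descent to $\Z[x]$ more explicit.
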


\begin{proof}
Observe that for any $2t^{\text{th}}$ root of unity $\zeta_{2t}$,
\[
\Psi_r(\zeta_{2t})=\Psi_{2t-1}(\zeta_{2t}).
\]
Then, note
\[
\Psi_{2t-1}(\zeta_{2t})=\zeta_{2t}^{t-1}.
\]
But $\zeta_{2t}^{t-1}=\zeta_t$, for some primitive $t^{\text{th}}$ root of unity. Thus,
\[
\Phi_t(\Psi_r(\zeta_{2t}))=\Phi_t(\zeta_t)=0.
\]
Since $\zeta_{2t}$ was arbitrary, all primitive $2t^{\text{th}}$ roots of unity are zeros of $\Phi_t(\Psi_r(x))$. Hence,
\[
\Phi_{2t}(x)\,\mid \,\Phi_t(\Psi_r(x)). \qedhere
\]
\end{proof}

Proposition \ref{Prop:Neat} implies that if $p^{e}\mid \mid N$ for some $e$ such that $e+1$ is prime and if $q=\sigma(p^e)$ is prime, then for even $m$ such that $q^m\mid \mid N$, then $\sigma(q^m)$ has at least two factors.
Thus, if there are many situations like this, we have even more total factors per distinct factor.

\section{Acknowledgments}
We thank our advisor Pace P. Nielsen for his invaluable assistance in preparing this paper. We thank Joshua Zelinsky, Pascal Ochem, and Micha\"{e}l Rao for comments that improved the paper.

\providecommand{\bysame}{\leavevmode\hbox to3em{\hrulefill}\thinspace}
\providecommand{\MR}{\relax\ifhmode\unskip\space\fi MR }
% \MRhref is called by the amsart/book/proc definition of \MR.
\providecommand{\MRhref}[2]{%
  \href{http://www.ams.org/mathscinet-getitem?mr=#1}{#2}
}
\providecommand{\href}[2]{#2}

\end{document}